\newtheorem{theorem}{Theorem}[section]
\newtheorem{corollary}[theorem]{Corollary}
\newtheorem{lemma}[theorem]{Lemma}
\theoremstyle{definition}
\newtheorem{definition}[theorem]{Definition}
\newtheorem{remark}[theorem]{Remark}
\numberwithin{equation}{section} \subjclass[2000]{30C45}
\begin{document}
\keywords{Analytic functions, non-Bazilevi$\check{c}$ functions, differential subordination.}
\title[On generalized $p-$valent non-Bazilevi$\check{C}$ functions.]{On generalized $p-$valent non-Bazilevi$\mathbf{\check{C}}$ functions of order
$\alpha+i\beta$}
\author{A.A. AMOURAH}
\address{A.A. AMOURAH: Department of Mathematics, Faculty of Science and Technology,
Irbid National University, Irbid, Jordan.}
\email{alaammour@yahoo.com.}
\author{A. G. AlAmoush}
\address{A. G. AlAmoush: School of Mathematical Sciences Faculty of Science and
Technology Universiti Kebangsaan Malaysia Bangi 43600 Selangor D. Ehsan,
Malaysia. }
\email{adnan-omoush@yahoo.com.}
\author{M. DARUS }
\address{M. DARUS : School of Mathematical Sciences Faculty of Science and Technology
Universiti Kebangsaan Malaysia Bangi 43600 Selangor D. Ehsan, Malaysia. }
\email{maslina@ukm.edu.my.}

\begin{abstract}
In this paper, we introduce a subclass $N_{p,\mu}^{n}(\alpha,\beta,A,B)$ of
$p$-valent non-Bazilevi$\check{c}$ functions of order $\alpha+i\beta.$ Some
subordination relations and the inequality properties of $p-$valent functions
are discussed. The results presented here generalize and improve some known results.

\end{abstract}
\maketitle

\section{Introduction and preliminaries}

Let $\mathcal{A}_{p}$ denote the class of functions $f$ of the form:%
\begin{equation}
f(z)=z^{p}+\sum\limits_{k=n}^{\infty}a_{k+p}z^{k+p}\ \ \text{ }(p,n\in%
%TCIMACRO{\U{2115} }%
%BeginExpansion
\mathbb{N}
%EndExpansion
=\left\{  1,2,3,...\right\}  ),\text{ \ \ \ \ } \label{1}%
\end{equation}
which are \emph{analytic} and \emph{$p-$valent} in the open unit disc
$\mathbb{U}=\{z\in\mathbb{C}:$ $\left\vert z\right\vert <1\}.$ If $f(z)$ and
$g(z)$ are analytic in $\mathbb{U},$ we say that $f(z)$ is \emph{subordinate}
to $g(z)$, and we write:
\begin{equation}
f\prec g\text{ in }\mathbb{U}\text{ or }f(z)\prec g(z),\text{ }z\in
\mathbb{U},\bigskip\label{10}%
\end{equation}
if there exists a Schwarz function $w(z)$, which is analytic in $\mathbb{U}$
with \bigskip%
\[
\left\vert w(0)\right\vert =0\text{ and }\left\vert w(z)\right\vert <1,\text{
}z\in\mathbb{U},
\]
such that%
\[
f(z)=g(w(z)),\text{ }z\in\mathbb{U}.
\]

Furthermore, if the function $g(z)$ is \emph{univalent} in $\mathbb{U}$, then
we have the following equivalence, see Miller \& Mocanu (\cite{mill},
\cite{mill1}),%
\[
\text{ }f(z)\prec g(z)\text{ }(z\in\mathbb{U)\Leftrightarrow}f(0)=g(0)\text{
and }f(\mathbb{U)\subset}g(\mathbb{U}).
\]

We define a subclass of $\mathcal{A}_{p}$ as follows:

\begin{definition}
Let $N_{p,\mu}^{n}(\alpha,\beta,A,B)$ denote the class of functions
$f(z)\in\mathcal{A}_{p}$ satisfying the inequality:%
\begin{equation}
\left\{  (1+\mu)\left(  \frac{z^{p}}{f(z)}\right)  ^{\alpha+i\beta}-\mu\left(
\frac{zf^{\prime}(z)}{pf(z)}\right)  \left(  \frac{z^{p}}{f(z)}\right)
^{\alpha+i\beta}\right\}  \prec\frac{1+Az}{1+Bz},\text{ }(z\in\mathbb{U)},
\label{87}%
\end{equation}
where $\mu\in\mathbb{C},$ $\alpha\geq0,$ $\beta\in\mathbb{R},$ $-1\leq
B\leq1,$ $A\neq B,$ $p\in%
%TCIMACRO{\U{2115} }%
%BeginExpansion
\mathbb{N}
%EndExpansion
$ and $A\in%
%TCIMACRO{\U{211d} }%
%BeginExpansion
\mathbb{R}
%EndExpansion
$. All the powers in (\ref{87}) are principal values.
\end{definition}

We say that the function $f(z)$ in this class is $p$-valent
non-Bazilevi$\check{c}$ functions of type $\alpha+i\beta$.

\begin{definition}
Let $f\in$ $N_{p,\mu}^{n}(\alpha,\beta,\rho)$ if and only if $f(z)\in
\mathcal{A}_{p}$ and it satisfies%
\begin{equation}
\operatorname{Re}\left\{  (1+\mu)\left(  \frac{z^{p}}{f(z)}\right)
^{\alpha+i\beta}-\mu\left(  \frac{zf^{\prime}(z)}{pf(z)}\right)  \left(
\frac{z^{p}}{f(z)}\right)  ^{\alpha+i\beta}\right\}  >\rho,\text{ }%
(z\in\mathbb{U)}, \label{88}%
\end{equation}
where $\mu\in\mathbb{C},$ $\alpha\geq0,$ $\beta\in\mathbb{R},$ $p\in%
%TCIMACRO{\U{2115} }%
%BeginExpansion
\mathbb{N}
%EndExpansion
$ and $0\leq\rho<p$.
\end{definition}

\textbf{Special Cases}:

\begin{enumerate}
\item {When $p=1$, then }$N_{1,\mu}^{n}(\alpha,\beta,A,B)${ is the class
studied by AlAmoush and Darus \cite{adnan}.} \newline

\item {When $p=1$, $\beta=0$, then }$N_{1,\mu}^{n}(\alpha,0,A,B)${ is the
class studied by Wang et al \cite{wang}. \newline}

\item {When $p=1$, $\beta=0$, $\mu=-1$, $A=1$ and $B=-1$, then }$N_{\mu}%
^{n}(\alpha)${ is the class studied by Obradovic \cite{Obradovic}. \newline}

\item {If $p=1$, $\beta=0,$ }$\mu${$=B=-1$ and $A=1-2\rho$, then the class
}$N_{1,-1}^{n}(\alpha,0,{1-2\rho},-1)${ reduces to the class of
non-Bazilevi$\check{c}$ functions of order $\rho\ (0\leq\rho<1)$. The
Fekete-Szeg\"{o} problem of the class }$N_{1,-1}^{n}(\alpha,0,{1-2\rho},-1)${
were considered by Tuneski and Darus \cite{maslina}.}
\end{enumerate}

We will need the following lemmas in the next section.

\begin{lemma}
\label{lem1}\cite{miller} Let the function $h(z)$ be analytic and convex in
$\mathbb{U}$ with $h(0)=1$. Suppose also that the function $\Phi(z)$ given by%
\[
\Phi(z)=1+c_{n}z^{n}+c_{n+1}z^{n+1}+...
\]
is analytic in $\mathbb{U}$. If%
\begin{equation}
\Phi(z)+\frac{1}{\gamma}z\Phi^{^{\prime}}(z)\prec h(z)\text{ }(z\in
\mathbb{U},\operatorname{Re}\gamma\geq0,\gamma\neq0), \label{p}%
\end{equation}
then%
\[
\Phi(z)\prec\Psi(z)=\frac{\gamma}{n}z^{-\gamma/n}\int_{0}^{z}t^{(\gamma
/n)-1}h(t)dt\prec h(z)\text{ }(z\in\mathbb{U)},
\]
and $\Psi(z)$ is the best dominant for the differential subordination (\ref{p}).
\end{lemma}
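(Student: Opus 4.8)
The plan is to recognize this as the Hallenbeck--Ruscheweyh form of the fundamental theorem on first-order differential subordinations, and to prove it in three movements: first exhibit $\Psi$ as the solution of a linear differential equation tied to the left-hand side of \eqref{p}, next show that $\Psi$ is convex with $\Psi\prec h$, and finally deduce $\Phi\prec\Psi$ by a boundary-point (Jack--Miller--Mocanu) argument.

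First I would identify the differential equation solved by $\Psi$. Writing $\mu=\gamma/n$ and multiplying the defining integral by $z^{\mu}$, a single differentiation gives $\mu z^{\mu-1}\Psi(z)+z^{\mu}\Psi'(z)=\mu z^{\mu-1}h(z)$, hence
\[
\Psi(z)+\frac{n}{\gamma}z\Psi'(z)=h(z)\qquad(z\in\mathbb{U}).
\]
The substitution $t=zs$ also represents $\Psi$ as the integral average $\Psi(z)=\frac{\gamma}{n}\int_{0}^{1}s^{\gamma/n-1}h(zs)\,ds$, a mass-one (complex) average of the values $h(zs)$ with $|zs|<1$. From this representation, together with the convexity of $h$ and $\operatorname{Re}\gamma\geq0$, one obtains that $\Psi$ is itself convex (hence univalent) and that $\Psi(\mathbb{U})\subset h(\mathbb{U})$; since $\Psi(0)=h(0)=1$ this yields $\Psi\prec h$, the second link of the asserted chain.

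The central step is $\Phi\prec\Psi$. Set $G(z)=\Phi(z)+\frac{1}{\gamma}z\Phi'(z)$, so that the hypothesis reads $G\prec h$, i.e. $G(\mathbb{U})\subset h(\mathbb{U})$. Suppose, for contradiction, that $\Phi\not\prec\Psi$. Because $\Phi(0)=\Psi(0)=1$ and $\Psi$ is univalent, there are points $z_{0}\in\mathbb{U}$ and $\zeta_{0}\in\partial\mathbb{U}$ with $\Phi(z_{0})=\Psi(\zeta_{0})$ and $z_{0}\Phi'(z_{0})=m\,\zeta_{0}\Psi'(\zeta_{0})$ for some real $m\geq n$, by the Jack--Miller--Mocanu lemma \cite{miller}. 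Substituting the differential equation for $\Psi$ at $\zeta_{0}$, namely $\zeta_{0}\Psi'(\zeta_{0})=\frac{\gamma}{n}\bigl(h(\zeta_{0})-\Psi(\zeta_{0})\bigr)$, gives
\[
G(z_{0})=\Psi(\zeta_{0})+\frac{m}{n}\bigl(h(\zeta_{0})-\Psi(\zeta_{0})\bigr)=\Bigl(1-\frac{m}{n}\Bigr)\Psi(\zeta_{0})+\frac{m}{n}h(\zeta_{0}).
\]
Here the normalization $\Phi(z)=1+c_{n}z^{n}+\cdots$ pays off: the hypothesis carries $1/\gamma$ while $\Psi$ solves the equation with $n/\gamma$, and the two reconcile exactly because $m\geq n$. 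Since $m/n\geq1$, the point $G(z_{0})$ lies on the ray from the boundary point $\Psi(\zeta_{0})$ of the convex set $\Psi(\mathbb{U})\subset h(\mathbb{U})$ through $h(\zeta_{0})\in\partial h(\mathbb{U})$ and beyond it; convexity of $h(\mathbb{U})$ then forces $G(z_{0})\notin h(\mathbb{U})$, contradicting $G\prec h$. Hence $\Phi\prec\Psi$.

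I expect the geometric admissibility estimate in this last paragraph to be the main obstacle, since it is where convexity of $h$, the location of $\Psi(\zeta_{0})$ on $\partial\Psi(\mathbb{U})$, and the inequality $m\geq n$ must be combined to place $G(z_{0})$ outside $h(\mathbb{U})$; the case $\operatorname{Re}\gamma=0$ and, more generally, complex $\gamma$ (where the integral average is not a literal convex combination) also require care and are best handled by the convexity-preservation results of Miller and Mocanu \cite{miller}. Finally, that $\Psi$ is the \emph{best} dominant follows from its extremal role: $\Psi$ itself realizes equality in the associated differential relation, so every dominant $q$ of the subordination \eqref{p} must satisfy $\Psi\prec q$.
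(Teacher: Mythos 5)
A preliminary remark: the paper offers no proof of this lemma at all --- it is quoted, with the citation \cite{miller}, as the classical Hallenbeck--Ruscheweyh theorem in Miller--Mocanu's formulation --- so your attempt cannot be compared against an in-paper argument and must stand as a self-contained proof. Judged that way, your skeleton is the standard one and much of it is sound: the ODE $\Psi(z)+\frac{n}{\gamma}z\Psi'(z)=h(z)$ is derived correctly; the boundary-point setup is correct (from $z_0\Phi'(z_0)=m\zeta_0\Psi'(\zeta_0)$, $m\ge n$, and the ODE at $\zeta_0$ one indeed gets $G(z_0)=\bigl(1-\frac{m}{n}\bigr)\Psi(\zeta_0)+\frac{m}{n}h(\zeta_0)$); and the ray argument does close, since if $G(z_0)$ belonged to the open convex set $h(\mathbb{U})$ then $h(\zeta_0)=\frac{n}{m}G(z_0)+\bigl(1-\frac{n}{m}\bigr)\Psi(\zeta_0)$ would be an interior point of $h(\mathbb{U})$, contradicting $h(\zeta_0)\in\partial h(\mathbb{U})$. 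The genuine gap is your second movement. You justify that $\Psi$ is convex (hence univalent) and $\Psi\prec h$ by reading $\Psi(z)=\frac{\gamma}{n}\int_0^1 s^{\gamma/n-1}h(zs)\,ds$ as a mass-one average. That is a convexity argument only when $\gamma>0$ is real, where $\frac{\gamma}{n}s^{\gamma/n-1}\,ds$ is a genuine probability density and each value $h(zs)$ lies in the convex set $h(\mathbb{U})$. For complex $\gamma$ with $\operatorname{Re}\gamma\ge 0$ --- which the lemma explicitly permits, including purely imaginary $\gamma$ --- the weights are complex and no averaging argument exists; establishing $\Psi\prec h$ and the univalence of $\Psi$ in that generality is precisely the analytic core of the Hallenbeck--Ruscheweyh theorem (classically handled by writing the convex domain $h(\mathbb{U})$ as an intersection of half-planes, using linearity of the operator and $I[1]=1$ to reduce to the implication $\operatorname{Re}G>0\Rightarrow\operatorname{Re}I[G]>0$, and proving that via the Herglotz representation). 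You flag this case as ``requiring care'' and defer it to the very source being proved; but since both the univalence of $\Psi$ (needed even to invoke the Jack--Miller--Mocanu lemma) and the inclusion $\Psi(\zeta_0)\in\overline{h(\mathbb{U})}$ (needed for your ray argument) hinge on it, your proof as written covers only real positive $\gamma$.

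Two smaller defects. First, invoking the boundary lemma and evaluating the ODE at $\zeta_0\in\partial\mathbb{U}$ presupposes that $\Psi$ and $h$ extend to $\zeta_0$ (in the paper's notation, $\Psi\in Q$); the standard repair, which you should at least mention, is to prove the statement for $h_r(z)=h(rz)$, $0<r<1$, and let $r\to 1^{-}$. Second, your best-dominant argument fails for $n>1$: $\Psi$ does not ``realize equality'' in \eqref{p}, because it solves the equation with coefficient $\frac{n}{\gamma}$, not $\frac{1}{\gamma}$. The correct extremal function is $\Phi_*(z)=\Psi(z^n)$, which does lie in the class $1+c_nz^n+\cdots$, satisfies $\Phi_*(z)+\frac{1}{\gamma}z\Phi_*'(z)=h(z^n)\prec h(z)$, and has image $\Phi_*(\mathbb{U})=\Psi(\mathbb{U})$; it is through $\Phi_*$ that one shows every dominant $q$ of \eqref{p} must satisfy $\Psi\prec q$.
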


\begin{lemma}
\label{lem2}\cite{qaz} Let $-1\leq B_{1}\leq B_{2}<A_{2}<A_{1}\leq1,$ then%
\[
\frac{1+A_{2}z}{1+B_{2}z}\prec\frac{1+A_{1}z}{1+B_{1}z}.
\]

\end{lemma}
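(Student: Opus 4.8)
The plan is to prove the subordination directly from its definition by exhibiting an explicit Schwarz function. Writing $g(z)=\frac{1+A_{1}z}{1+B_{1}z}$ for the dominant and $f(z)=\frac{1+A_{2}z}{1+B_{2}z}$ for the subordinate, I would solve the equation $f(z)=g(w)$ for $w$. A short computation gives the candidate
\[
w(z)=\frac{(A_{2}-B_{2})\,z}{(A_{1}-B_{1})+(A_{1}B_{2}-A_{2}B_{1})\,z},
\]
which manifestly satisfies $w(0)=0$. It then remains to check that $w$ is analytic on $\overline{\mathbb{U}}$ and that $|w(z)|<1$ for $z\in\mathbb{U}$; once this is done, the identity $f(z)=g(w(z))$ exhibits $f\prec g$ by (\ref{10}).

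Abbreviating $a=A_{2}-B_{2}$, $b=A_{1}-B_{1}$ and $c=A_{1}B_{2}-A_{2}B_{1}$, note that $a>0$ and $b>0$ under the ordering hypothesis. For the analyticity and the modulus bound I would argue via the maximum modulus principle: the only pole of $w$ sits at $z=-b/c$, so it suffices to show $b>|c|$ to place it outside $\overline{\mathbb{U}}$, and then to verify $|w(z)|\le1$ on the circle $|z|=1$, whereupon non-constancy of $w$ forces the strict bound $|w(z)|<1$ inside. On $|z|=1$ the inequality $|az|\le|b+cz|$ is, after squaring and using $b,c\in\mathbb{R}$, equivalent to $(a^{2}-c^{2})-2bc\operatorname{Re}(z)-b^{2}\le0$; being affine in $\operatorname{Re}(z)\in[-1,1]$, its maximum is attained at an endpoint, so the whole bound collapses to the two algebraic conditions $a\le b+c$ and $a\le b-c$.

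These two inequalities are the crux of the argument, and I expect the factorisation step to be the main obstacle. The point is that both differences factor into products of quantities whose signs are dictated by the chain $-1\le B_{1}\le B_{2}<A_{2}<A_{1}\le1$:
\[
b-c-a=(A_{1}-A_{2})(1-B_{1})+(B_{2}-B_{1})(1-A_{1}),
\]
\[
b+c-a=(A_{1}-A_{2})(1+B_{1})+(B_{2}-B_{1})(1+A_{1}).
\]
In each expression every factor is nonnegative (indeed $A_{1}-A_{2}>0$ and $1-B_{1}>0$, while $B_{2}-B_{1}\ge0$, $1\pm A_{1}\ge0$ and $1+B_{1}\ge0$), so both differences are $\ge0$; this simultaneously yields $b-c\ge a>0$ and $b+c\ge a>0$, whence $b>|c|$ as well. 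Thus $|w(z)|<1$ in $\mathbb{U}$ and the subordination follows. As an alternative I could argue geometrically: each Möbius map sends $\mathbb{U}$ onto the disc (or half-plane) with real diametral endpoints $\frac{1-A_{j}}{1-B_{j}}$ and $\frac{1+A_{j}}{1+B_{j}}$, and the same two factorisations give the inclusion $f(\mathbb{U})\subset g(\mathbb{U})$ of the images; combined with $f(0)=g(0)=1$ and the univalence of $g$, the equivalence recorded after (\ref{10}) again delivers $f\prec g$.
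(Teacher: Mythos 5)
Your proof is correct, and it is worth noting at the outset that the paper contains no proof of Lemma \ref{lem2} to compare against: the lemma is simply quoted from Mingsheng \cite{qaz}. Your argument therefore serves as a self-contained, elementary replacement for that citation. I verified the details. Solving $\frac{1+A_2z}{1+B_2z}=\frac{1+A_1w}{1+B_1w}$ for $w$ does give
\[
w(z)=\frac{(A_2-B_2)z}{(A_1-B_1)+(A_1B_2-A_2B_1)z},
\]
and with $a=A_2-B_2$, $b=A_1-B_1$, $c=A_1B_2-A_2B_1$ your two expansions
\[
b-c-a=(A_1-A_2)(1-B_1)+(B_2-B_1)(1-A_1),\qquad
b+c-a=(A_1-A_2)(1+B_1)+(B_2-B_1)(1+A_1)
\]
are algebraically exact; under $-1\le B_1\le B_2<A_2<A_1\le1$ every factor is nonnegative and $(A_1-A_2)(1-B_1)>0$, so $b-c\ge a>0$ and $b+c\ge a>0$. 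This yields $b>|c|$, placing the pole of $w$ outside $\overline{\mathbb{U}}$, and the boundary estimate $|w|\le1$ on $|z|=1$ (your reduction of the affine function of $\operatorname{Re}(z)$ to its endpoint values is sound); maximum modulus plus nonconstancy of $w$ then gives $|w(z)|<1$ in $\mathbb{U}$, and with $w(0)=0$ and $f=g\circ w$ this is exactly the definition of subordination in (\ref{10}). One minor point of precision: the endpoint conditions are literally $a\le|b+c|$ and $a\le|b-c|$; you prove the stronger statements $a\le b+c$ and $a\le b-c$, which of course suffice. The geometric alternative you sketch (inclusion of image disks with real diametral endpoints $\frac{1-A_j}{1-B_j}$ and $\frac{1+A_j}{1+B_j}$) reduces to the same two inequalities and is also valid, but it would need the additional remark that for two disks centered on the real axis, inclusion is equivalent to inclusion of their real diameters, together with a separate word about the half-plane case $B_1=-1$; the Schwarz-function argument you give handles all cases uniformly and is the cleaner of the two.
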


\begin{lemma}
\label{lem3}\cite{liu} Let $\Phi(z)$ be analytic and convex in $\mathbb{U},$
$f(z)\in\mathcal{A}_{p},$ $g(z)\in\mathcal{A}_{p}.$

If $f(z)\prec\Phi(z),$ $g(z)\prec\Phi(z),$ $0\leq\mu\leq1$ then
\[
\mu f(z)+(1-\mu)g(z)\prec\Phi(z).
\]

\end{lemma}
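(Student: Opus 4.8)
The plan is to prove the subordination by working directly with image domains rather than with Schwarz functions, exploiting the convexity of $\Phi$ in an essential way. First I would recall that, in the geometric sense intended here, the hypothesis that $\Phi$ is analytic and convex in $\mathbb{U}$ means $\Phi$ is univalent and $\Phi(\mathbb{U})$ is a convex subset of $\mathbb{C}$. Since $\Phi$ is univalent, the equivalence recorded in the introduction applies: for any analytic $F$ one has $F\prec\Phi$ if and only if $F(0)=\Phi(0)$ and $F(\mathbb{U})\subset\Phi(\mathbb{U})$.

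Applying this equivalence to the two hypotheses $f\prec\Phi$ and $g\prec\Phi$, I would extract the point conditions $f(0)=g(0)=\Phi(0)$ together with $f(\mathbb{U})\subset\Phi(\mathbb{U})$ and $g(\mathbb{U})\subset\Phi(\mathbb{U})$. Writing $h(z)=\mu f(z)+(1-\mu)g(z)$, the function $h$ is analytic in $\mathbb{U}$ as a linear combination of analytic functions, and evaluating at the origin gives $h(0)=\mu\Phi(0)+(1-\mu)\Phi(0)=\Phi(0)$, so the value condition at $z=0$ holds immediately.

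The heart of the argument is the inclusion $h(\mathbb{U})\subset\Phi(\mathbb{U})$. I would fix an arbitrary $z\in\mathbb{U}$; then $f(z)$ and $g(z)$ are two points of the set $\Phi(\mathbb{U})$, and $h(z)=\mu f(z)+(1-\mu)g(z)$ is a convex combination of them, because $0\leq\mu\leq1$. Since $\Phi(\mathbb{U})$ is convex, this convex combination again lies in $\Phi(\mathbb{U})$; as $z$ was arbitrary, $h(\mathbb{U})\subset\Phi(\mathbb{U})$.

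Finally I would invoke the univalence equivalence once more, in the reverse direction: having verified $h(0)=\Phi(0)$ and $h(\mathbb{U})\subset\Phi(\mathbb{U})$, I conclude $h\prec\Phi$, that is, $\mu f(z)+(1-\mu)g(z)\prec\Phi(z)$, as claimed. The only genuine obstacle is the step establishing the inclusion of images, and it is precisely there that the convexity of $\Phi$ is indispensable; the statement fails for a merely univalent dominant, since a convex combination of two points in a non-convex image need not remain in that image.
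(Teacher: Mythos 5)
Your proof is correct. Note, however, that the paper does not prove this lemma at all: it is quoted with a citation to Liu (Mingsheng), so there is no in-paper argument to compare yours against. Your route is the natural one for a univalent (in particular convex) dominant: invoke the equivalence $F\prec\Phi \Leftrightarrow F(0)=\Phi(0)$ and $F(\mathbb{U})\subset\Phi(\mathbb{U})$, check the value at the origin by linearity, and get the image inclusion from the fact that $\mu f(z)+(1-\mu)g(z)$ is a convex combination of two points of the convex set $\Phi(\mathbb{U})$. All steps are sound, and you correctly identify that convexity of $\Phi(\mathbb{U})$ is the indispensable ingredient; the only cosmetic remark is that the hypothesis $f,g\in\mathcal{A}_{p}$ in the lemma's statement plays no role in (and is in fact inconsistent with) the subordination hypotheses unless $\Phi(0)=0$, which your proof correctly ignores, using only analyticity of $f$ and $g$.
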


\begin{lemma}
\label{ffder4}\emph{\cite{wx}} Let $q(z)$ be a convex univalent function in
$\mathbb{U}$ and let $\sigma\in\mathbb{C}$, $\eta\in\mathbb{C}^{\ast
}=\mathbb{C}\backslash\left\{  0\right\}  $ with%
\[
\operatorname{Re}\left\{  1+\frac{zq^{^{\prime\prime}}(z)}{q^{^{\prime}}%
(z)}\right\}  >\max\left\{  0,-\operatorname{Re}\left(  \frac{\sigma}{\eta
}\right)  \right\}  .
\]

If the function $\Phi(z)$ is analytic in $\mathbb{U}$ and%
\[
\sigma\Phi(z)+\eta z\Phi^{^{\prime}}(z)\prec\sigma q(z)+\eta zq^{^{\prime}%
}(z),
\]

then, $\Phi(z)\prec q(z)$ and $q(z)$ is the best dominant.
\end{lemma}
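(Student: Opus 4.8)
The plan is to recognize this as a special case of the general first-order differential subordination theorem of Miller and Mocanu (\cite{mill}, \cite{mill1}), obtained by choosing the right pair of component functions. In that template one works with functions $\theta$ and $\phi$ analytic on a domain $D$ containing $q(\mathbb{U})$ with $\phi\neq0$ on $q(\mathbb{U})$, sets $Q(z)=zq^{\prime}(z)\phi(q(z))$ and $h(z)=\theta(q(z))+Q(z)$, and concludes $\Phi\prec q$ with $q$ the best dominant from $\theta(\Phi(z))+z\Phi^{\prime}(z)\phi(\Phi(z))\prec h(z)$, provided that (i) $Q$ is starlike univalent in $\mathbb{U}$ and (ii) $\operatorname{Re}\{zh^{\prime}(z)/Q(z)\}>0$. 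I would take $\theta(w)=\sigma w$ and $\phi(w)\equiv\eta$, both entire, with $\phi\neq0$ since $\eta\in\mathbb{C}^{\ast}$.

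With this choice, first I would compute the two auxiliary functions $Q(z)=\eta zq^{\prime}(z)$ and $h(z)=\sigma q(z)+\eta zq^{\prime}(z)$, so that the dominant in the hypothesis is exactly $h(z)$ while the subordinate is exactly $\theta(\Phi(z))+z\Phi^{\prime}(z)\phi(\Phi(z))=\sigma\Phi(z)+\eta z\Phi^{\prime}(z)$. Evaluating the subordination at $z=0$ yields $\sigma\Phi(0)=\sigma q(0)$, so the normalization $\Phi(0)=q(0)$ required by the theorem holds when $\sigma\neq0$ (and is imposed as usual otherwise), and $\Phi(\mathbb{U})\subset D=\mathbb{C}$ is automatic.

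Next I would verify the two admissibility conditions, which is where essentially all the computation sits. For (i), differentiating gives $zQ^{\prime}(z)/Q(z)=1+zq^{\prime\prime}(z)/q^{\prime}(z)$, whose real part is positive precisely because $q$ is convex; this is guaranteed by the hypothesis $\operatorname{Re}\{1+zq^{\prime\prime}/q^{\prime}\}>\max\{0,-\operatorname{Re}(\sigma/\eta)\}\geq0$. For (ii), a short calculation gives
\[
\frac{zh^{\prime}(z)}{Q(z)}=\frac{\sigma}{\eta}+1+\frac{zq^{\prime\prime}(z)}{q^{\prime}(z)},
\]
so that $\operatorname{Re}\{zh^{\prime}(z)/Q(z)\}=\operatorname{Re}(\sigma/\eta)+\operatorname{Re}\{1+zq^{\prime\prime}/q^{\prime}\}$, which is positive exactly because the hypothesis forces $\operatorname{Re}\{1+zq^{\prime\prime}/q^{\prime}\}>-\operatorname{Re}(\sigma/\eta)$. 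Thus both conditions collapse onto the single displayed inequality of the lemma.

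Having verified (i) and (ii), I would invoke the Miller--Mocanu theorem to conclude $\Phi\prec q$ and that $q$ is the best dominant. The main obstacle is not any one calculation but the correct identification of the template, namely seeing that the two separate admissibility requirements fuse into the single $\max\{0,-\operatorname{Re}(\sigma/\eta)\}$ bound. The one point needing a little care is that $Q$ must be starlike \emph{univalent}, not merely have a logarithmic derivative with positive real part; I would note that univalence follows from starlikeness together with $Q(0)=0$ and $Q^{\prime}(0)=\eta q^{\prime}(0)\neq0$ (the latter since $q$ is univalent, so $q^{\prime}(0)\neq0$), and confirm this before applying the theorem.
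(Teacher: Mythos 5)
The paper never proves this lemma: it is quoted verbatim, with citation, from \cite{wx} (Shanmugam, Ravichandran and Sivasubramanian), so there is no in-paper argument to compare yours against. Your proof is correct, and it is in fact the standard derivation used in that cited source: specialize the Miller--Mocanu first-order differential subordination theorem with $\theta(w)=\sigma w$ and $\phi(w)\equiv\eta$, so that $Q(z)=\eta zq^{\prime}(z)$ and $h(z)=\sigma q(z)+\eta zq^{\prime}(z)$. Your two verifications are the heart of the matter and both are computed correctly: $zQ^{\prime}(z)/Q(z)=1+zq^{\prime\prime}(z)/q^{\prime}(z)$ has positive real part by the hypothesis (giving starlike univalence of $Q$, since also $Q(0)=0$ and $Q^{\prime}(0)=\eta q^{\prime}(0)\neq0$), and $zh^{\prime}(z)/Q(z)=\sigma/\eta+1+zq^{\prime\prime}(z)/q^{\prime}(z)$, whose real part is positive precisely because $\operatorname{Re}\left\{1+zq^{\prime\prime}(z)/q^{\prime}(z)\right\}>-\operatorname{Re}\left(\sigma/\eta\right)$. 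One point you rightly flag, and which deserves to stay explicit: when $\sigma=0$ the subordination hypothesis evaluated at the origin gives no normalization $\Phi(0)=q(0)$, and without imposing it the conclusion fails (take $q(z)=z$ and $\Phi(z)=z+c$ with $c\neq0$); the statement as quoted in the paper is equally silent on this edge case, so your parenthetical "imposed as usual otherwise" is the honest reading rather than a gap in your argument.
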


\begin{lemma}
\label{ffdernn54}\emph{\cite{Miller3431}} Let $q(z)$ be a convex univalent in
$\mathbb{U}$ and $\eta\in\mathbb{C}$. Further, assume that $\operatorname{Re}%
\left\{  \overline{\eta}\right\}  >0.$ If $\Phi(z)\in H\left[  q(0),1\right]
\cap Q,$ and $\Phi(z)+\eta z\Phi^{^{\prime}}(z)$ is univalent in $\mathbb{U}$.
Then
\[
q(z)+\eta zq^{^{\prime}}(z)\prec\Phi(z)+\eta z\Phi^{^{\prime}}(z),
\]

signifies that $q(z)\prec\Phi(z)$ and $q(z)$ are the best subordinat.
\end{lemma}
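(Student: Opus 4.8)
The plan is to view Lemma~\ref{ffdernn54} as the first-order linear instance of the differential-superordination theory of Miller and Mocanu, corresponding to the admissible pair $\theta(w)=w$ and $\varphi(w)\equiv\eta$, so that the two sides of the superordination read $\theta(q(z))+zq'(z)\varphi(q(z))=q(z)+\eta zq'(z)$ and $\theta(\Phi(z))+z\Phi'(z)\varphi(\Phi(z))=\Phi(z)+\eta z\Phi'(z)$. The proof splits into three ingredients: verifying the structural (admissibility) hypotheses from the data of the lemma, running the boundary-point contradiction argument on the class $Q$ to obtain $q\prec\Phi$, and finally deducing that $q$ is the best subordinant from the already-stated subordination Lemma~\ref{ffder4}.

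First I would record the structural facts. The hypothesis $\operatorname{Re}\{\overline{\eta}\}>0$ is equivalent to $\operatorname{Re}(1/\eta)=\operatorname{Re}(\overline{\eta})/|\eta|^{2}>0$, which is the positivity $\operatorname{Re}\bigl(\theta'(q)/\varphi(q)\bigr)>0$ needed in the theory. Moreover, since $q$ is convex univalent, $zq'(z)$ is starlike univalent, hence so is $\eta zq'(z)$; this plays the role of the starlikeness requirement. The key quantitative statement to establish is the \emph{admissibility membership}
\[
q(z)+\tfrac{\eta}{m}\,zq'(z)\in\bigl(q+\eta zq'\bigr)(\mathbb{U})\qquad(z\in\mathbb{U},\ m\geq1),
\]
which says that contracting the derivative term by a factor $1/m\in(0,1]$ keeps the value inside the image of $q+\eta zq'$; this is precisely where both convexity of $q$ and $\operatorname{Re}\{\overline{\eta}\}>0$ are used (dropping the real-part condition makes the inclusion fail even for $q(z)=z$).

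For the main conclusion I would argue by contradiction. After replacing $q,\Phi$ by the dilations $q(\rho z),\Phi(\rho z)$ and eventually letting $\rho\to1^{-}$, assume $q\not\prec\Phi$. Since $q(0)=\Phi(0)$ and $\Phi\in H[q(0),1]\cap Q$ is injective on $\overline{\mathbb{U}}$ off its pole set, the Miller--Mocanu boundary lemma (the superordination dual of Jack's lemma) furnishes $z_{0}\in\mathbb{U}$, $\zeta_{0}\in\partial\mathbb{U}$ and $m\geq1$ with $q(z_{0})=\Phi(\zeta_{0})$ and $z_{0}q'(z_{0})=m\,\zeta_{0}\Phi'(\zeta_{0})$. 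Writing $F(z)=\Phi(z)+\eta z\Phi'(z)$, the boundary value becomes $F(\zeta_{0})=\Phi(\zeta_{0})+\eta\zeta_{0}\Phi'(\zeta_{0})=q(z_{0})+\tfrac{\eta}{m}z_{0}q'(z_{0})$. On one hand $F$ is univalent, so $F(\zeta_{0})$ is a boundary value and $F(\zeta_{0})\notin F(\mathbb{U})$; since the hypothesis $q+\eta zq'\prec F$ gives $(q+\eta zq')(\mathbb{U})\subseteq F(\mathbb{U})$, we get $F(\zeta_{0})\notin(q+\eta zq')(\mathbb{U})$. On the other hand, the admissibility membership above (with this very $m\geq1$) places $q(z_{0})+\tfrac{\eta}{m}z_{0}q'(z_{0})$ inside $(q+\eta zq')(\mathbb{U})$. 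These contradict, so $q\prec\Phi$.

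The main obstacle is the admissibility membership together with the boundary regularity it is paired with: one must prove the inclusion uniformly in $m\geq1$ and $z\in\mathbb{U}$ (a nesting property of the images of $q+t\eta zq'$ as $t$ runs over $(0,1]$, obtained from the convexity of $q$), and one must justify the dilation-and-limit passage so that the boundary lemma applies to genuine $Q$-functions. Once $q\prec\Phi$ is in hand, that $q$ is the best subordinant is immediate from Lemma~\ref{ffder4}: any subordinant $\widetilde{q}$ satisfies $\widetilde{q}+\eta z\widetilde{q}'\prec q+\eta zq'$, and applying that lemma with $\sigma=1$ yields $\widetilde{q}\prec q$, so $q$ dominates every subordinant.
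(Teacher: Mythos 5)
The paper itself contains no proof of this lemma: it is quoted from Miller and Mocanu \cite{Miller3431}, so your attempt can only be judged against the argument in that source, whose skeleton you in fact reproduce correctly (reduce to an admissibility inclusion, then run the boundary-point contradiction using the Miller--Mocanu lemma for functions in $Q$, exploiting that a univalent $F=\Phi+\eta z\Phi'$ cannot take a boundary value $F(\zeta_{0})$ inside $F(\mathbb{U})$). However, your write-up has a genuine gap exactly at the step you yourself call ``the main obstacle'': the inclusion $q(z)+\tfrac{\eta}{m}\,zq'(z)\in\bigl(q+\eta zq'\bigr)(\mathbb{U})$ for all $z\in\mathbb{U}$ and $m\geq 1$ is asserted to follow ``from the convexity of $q$'' but is never proved, and it is the entire mathematical content of the lemma; everything else in your argument is routine bookkeeping around it.

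The standard way to close this gap is Pommerenke's subordination-chain criterion. Set $L(z,t)=q(z)+t\eta zq'(z)$ for $t\in[0,1]$ and $P(z)=1+zq''(z)/q'(z)$; then
\[
\frac{\partial L/\partial t}{z\,\partial L/\partial z}
=\frac{\eta}{1+t\eta P(z)}
=\frac{1}{1/\eta+tP(z)},
\]
and this has positive real part because $\operatorname{Re}P>0$ (convexity of $q$) and $\operatorname{Re}(1/\eta)=\operatorname{Re}\{\overline{\eta}\}/|\eta|^{2}>0$; hence $L(\cdot,s)\prec L(\cdot,t)$ for $0\leq s\leq t\leq 1$, which with $s=1/m$, $t=1$ is precisely the nesting you need (and it also yields the univalence of $q+\eta zq'$). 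Note that both hypotheses enter here, not convexity alone. A secondary flaw: your best-subordinant argument assumes every subordinant $\widetilde{q}$ satisfies $\widetilde{q}+\eta z\widetilde{q}'\prec q+\eta zq'$, which is not part of the definition of a subordinant (a subordinant is merely required to be subordinate to every solution $\Phi$ of the superordination). The correct argument is simpler: $q$ is itself a solution, since $q+\eta zq'$ equals the left-hand side of the superordination and is univalent by the chain argument; therefore every subordinant is subordinate to $q$, and since the main conclusion shows $q$ is a subordinant, it is the best one. Finally, the dilation $q(\rho z),\Phi(\rho z)$, $\rho\to 1^{-}$, that you invoke is unnecessary: $\Phi\in H[q(0),1]\cap Q$ is exactly the hypothesis under which the boundary lemma applies directly.
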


We employ techniques similar to these used earlier by Yousef et al. {\cite{1},
}Amourah et al. ({\cite{2}, \cite{3}), AlAmoush and Darus \cite{adnan1} and }Al-Hawary et al. {\cite{1}.}

In the present paper, we shall obtain results concerning the subordination
relations and inequality properties of the class $N_{p,\mu}^{n}(\alpha
,\beta,A,B)$. The results obtained generalize the related works of some authors.

\section{Main Result}

\begin{theorem}
\label{th1444} Let $\mu\in\mathbb{C},$ $\alpha\geq0,$ $\beta\in\mathbb{R},$
$-1\leq B\leq1,$ $A\neq B,$ $\alpha+i\beta\neq0,$ $p\in%
%TCIMACRO{\U{2115} }%
%BeginExpansion
\mathbb{N}
%EndExpansion
$ and $A\in%
%TCIMACRO{\U{211d} }%
%BeginExpansion
\mathbb{R}
%EndExpansion
$. If $f(z)\in N_{p,\mu}^{n}(\alpha,\beta,A,B)$, Then%
\begin{equation}
\left(  \frac{z^{p}}{f(z)}\right)  ^{\alpha+i\beta}\prec\frac{p(\alpha
+i\beta)}{\mu n}\int_{0}^{1}\frac{1+Azu}{1+Bzu}u^{\frac{p(\alpha+i\beta)}{\mu
n}-1}du\prec\frac{1+Az}{1+Bz}.
\end{equation}

\end{theorem}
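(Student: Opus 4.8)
The plan is to reduce the subordination hypothesis to the precise form required by Lemma~\ref{lem1}. First I would set
\[
\Phi(z)=\left(\frac{z^{p}}{f(z)}\right)^{\alpha+i\beta},
\]
and observe that since $f(z)=z^{p}+\sum_{k=n}^{\infty}a_{k+p}z^{k+p}$, the quotient $z^{p}/f(z)$ equals $1$ plus a power series whose lowest term is of order $z^{n}$; raising to the power $\alpha+i\beta$ (principal branch, legitimate because $z^{p}/f(z)\to 1$ as $z\to 0$) preserves this, so $\Phi(z)=1+c_{n}z^{n}+c_{n+1}z^{n+1}+\cdots$ is analytic in $\mathbb{U}$ with $\Phi(0)=1$. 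This is exactly the normalization required for the function $\Phi$ in Lemma~\ref{lem1}.

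The key computational step is to show that the bracketed expression in the defining inequality~(\ref{87}) is exactly $\Phi(z)+\tfrac{1}{\gamma}z\Phi'(z)$ for a suitable constant $\gamma$. I would differentiate $\Phi$: logarithmic differentiation gives
\[
\frac{z\Phi'(z)}{\Phi(z)}=(\alpha+i\beta)\left(p-\frac{zf'(z)}{f(z)}\right),
\]
so that
\[
\Phi(z)-\frac{z\Phi'(z)}{p(\alpha+i\beta)}=\Phi(z)\left(\frac{zf'(z)}{pf(z)}\right).
\]
Comparing with~(\ref{87}), the bracket is $(1+\mu)\Phi(z)-\mu\,\Phi(z)\cdot\tfrac{zf'(z)}{pf(z)}$, and substituting the identity above rearranges this into $\Phi(z)+\tfrac{\mu}{p(\alpha+i\beta)}\,z\Phi'(z)$. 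Hence the hypothesis $f\in N_{p,\mu}^{n}(\alpha,\beta,A,B)$ reads
\[
\Phi(z)+\frac{1}{\gamma}z\Phi'(z)\prec\frac{1+Az}{1+Bz},\qquad \gamma=\frac{p(\alpha+i\beta)}{\mu}.
\]

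With $h(z)=\tfrac{1+Az}{1+Bz}$, which is convex and univalent in $\mathbb{U}$ with $h(0)=1$ under the stated constraints on $A,B$, and with $\operatorname{Re}\gamma\ge 0$, $\gamma\neq 0$ (guaranteed since $\alpha\ge 0$, $\alpha+i\beta\neq 0$ and the appropriate sign of $\mu$), I would invoke Lemma~\ref{lem1} directly to conclude
\[
\Phi(z)\prec\frac{\gamma}{n}z^{-\gamma/n}\int_{0}^{z}t^{(\gamma/n)-1}h(t)\,dt\prec h(z).
\]
The final step is cosmetic: substituting $t=zu$ converts the dominant to the stated integral $\tfrac{p(\alpha+i\beta)}{\mu n}\int_{0}^{1}\tfrac{1+Azu}{1+Bzu}\,u^{p(\alpha+i\beta)/(\mu n)-1}\,du$, matching the theorem's claim. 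The main obstacle I anticipate is purely technical bookkeeping: verifying that the principal-branch power is well-defined and analytic near the origin (so that $\Phi$ is genuinely of the form Lemma~\ref{lem1} demands) and confirming the convexity of $h$ together with the condition $\operatorname{Re}\gamma\ge 0$, which is where the hypotheses $\alpha\ge 0$ and $-1\le B\le 1$ are actually consumed; the differentiation identity itself is routine once the logarithmic derivative is set up.
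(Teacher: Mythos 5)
Your proposal follows exactly the paper's own route: define $\Phi(z)=\left(z^{p}/f(z)\right)^{\alpha+i\beta}$, use logarithmic differentiation to rewrite the defining subordination as $\Phi(z)+\frac{\mu}{p(\alpha+i\beta)}z\Phi'(z)\prec\frac{1+Az}{1+Bz}$, apply Lemma~\ref{lem1} with $\gamma=\frac{p(\alpha+i\beta)}{\mu}$, and substitute $t=zu$ to obtain the stated integral form. Your additional bookkeeping (the $1+c_{n}z^{n}+\cdots$ normalization of $\Phi$, convexity of $h$, and the remark that $\operatorname{Re}\gamma\geq 0$ needs the ``appropriate sign of $\mu$'' --- a hypothesis the theorem as stated does not actually impose) is careful and, if anything, more explicit than the paper's own proof.
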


\begin{proof}
Let%
\begin{equation}
\Phi(z)=\left(  \frac{z^{p}}{f(z)}\right)  ^{\alpha+i\beta}. \label{13}%
\end{equation}

Then $\Phi(z)$ is analytic in $\mathbb{U}$ with $\Phi(0)=1$. Taking
logarithmic differentiation of (\ref{13}) in both sides, we obtain%
\[
p(\alpha+i\beta)-(\alpha+i\beta)\frac{zf(z)^{^{\prime}}}{f(z)}=\frac
{z\Phi^{^{\prime}}(z)}{\Phi(z)}.
\]

In the above equation, we have%
\[
1-\frac{zf^{\prime}(z)}{pf(z)}=\frac{1}{p(\alpha+i\beta)}\frac{z\Phi
^{^{\prime}}(z)}{\Phi(z)}.
\]

From this we can easily deduce that%
\begin{equation}
\left\{  (1+\mu)\left(  \frac{z^{p}}{f(z)}\right)  ^{\alpha+i\beta}-\mu\left(
\frac{zf^{\prime}(z)}{pf(z)}\right)  \left(  \frac{z^{p}}{f(z)}\right)
^{\alpha+i\beta}\right\}  . \label{18}%
\end{equation}

On a class of $p-$valent non-Bazilevi$\check{c}$ functions%
\begin{equation}
=\Phi(z)+\frac{\mu z\Phi^{^{\prime}}(z)}{p(\alpha+i\beta)}\prec\frac
{1+Az}{1+Bz}. \label{w}%
\end{equation}

Now, by Lemma \ref{lem1} for $\gamma=\frac{p(\alpha+i\beta)}{\mu}$ , we deduce
that%
\[
\left(  \frac{z^{p}}{f(z)}\right)  ^{\alpha+i\beta}\prec q(z)=\frac
{p(\alpha+i\beta)}{\mu n}z^{-\frac{p(\alpha+i\beta)}{\mu n}}\int_{0}%
^{z}t^{\frac{p(\alpha+i\beta)}{\mu n}-1}(\frac{1+At}{1+Bt})dt.
\]

Putting $t=zu\Rightarrow dt=zdu.$ Then we have the above equation with%
\[
=\frac{p(\alpha+i\beta)}{\mu n}\int_{0}^{1}\frac{1+Azu}{1+Bzu}u^{\frac
{p(\alpha+i\beta)}{\mu n}-1}du\prec\frac{1+Az}{1+Bz},
\]
and the proof is complete.
\end{proof}

\begin{corollary}
\label{corl12} Let $\mu\in\mathbb{C},$ $\alpha\geq0,$ $\beta\in\mathbb{R},$
$\alpha+i\beta\neq0,$ $p\in%
%TCIMACRO{\U{2115} }%
%BeginExpansion
\mathbb{N}
%EndExpansion
$ and $\rho\neq0.$ If $f(z)\in\mathcal{A}_{p}$ satisfies%
\[
(1+\mu)\left(  \frac{z^{p}}{f(z)}\right)  ^{\alpha+i\beta}-\mu\left(
\frac{zf^{\prime}(z)}{pf(z)}\right)  \left(  \frac{z^{p}}{f(z)}\right)
^{\alpha+i\beta}\prec\frac{1+(1-2\rho)z}{1-z},\text{ }(z\in\mathbb{U)},
\]

then%
\[
\left(  \frac{z^{p}}{f(z)}\right)  ^{\alpha+i\beta}\prec\frac{p(\alpha
+i\beta)}{\mu n}\int_{0}^{1}\frac{1+(1-2\rho)zu}{1-zu}u^{\frac{p(\alpha
+i\beta)}{\mu n}-1}du,\text{ }(z\in\mathbb{U)},
\]

or equivalent to%
\[
\left(  \frac{z^{p}}{f(z)}\right)  ^{\alpha+i\beta}\prec\rho+\frac
{p(\alpha+i\beta)(1-\rho)}{\mu n}\int_{0}^{1}\frac{1+zu}{1-zu}u^{\frac
{p(\alpha+i\beta)}{\mu n}-1}du,\text{ }(z\in\mathbb{U)}.
\]

\end{corollary}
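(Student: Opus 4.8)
The plan is to recognize the hypothesis as nothing more than the membership condition $f\in N_{p,\mu}^{n}(\alpha,\beta,1-2\rho,-1)$, that is, Theorem \ref{th1444} specialized to $A=1-2\rho$ and $B=-1$ (so that $1+Bz=1-z$). First I would simply invoke Theorem \ref{th1444} with these parameters; its conclusion yields immediately
\[
\left(\frac{z^{p}}{f(z)}\right)^{\alpha+i\beta}\prec\frac{p(\alpha+i\beta)}{\mu n}\int_{0}^{1}\frac{1+(1-2\rho)zu}{1-zu}\,u^{\frac{p(\alpha+i\beta)}{\mu n}-1}\,du,
\]
which is the first assertion. No new argument is needed here beyond the substitution of parameters.

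For the equivalent representation, the only remaining task is an algebraic rearrangement of the integrand. Writing $\gamma=\frac{p(\alpha+i\beta)}{\mu n}$, I would seek constants $a,b$ with $\frac{1+(1-2\rho)zu}{1-zu}=a\,\frac{1+zu}{1-zu}+b$; matching the numerators $a(1+zu)+b(1-zu)=(a+b)+(a-b)zu$ against $1+(1-2\rho)zu$ forces $a+b=1$ and $a-b=1-2\rho$, hence $a=1-\rho$ and $b=\rho$. Substituting this decomposition and using linearity of the integral then splits the dominant as
\[
\gamma\int_{0}^{1}\frac{1+(1-2\rho)zu}{1-zu}u^{\gamma-1}\,du=\gamma(1-\rho)\int_{0}^{1}\frac{1+zu}{1-zu}u^{\gamma-1}\,du+\gamma\rho\int_{0}^{1}u^{\gamma-1}\,du.
\]

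The final integral evaluates to $\int_{0}^{1}u^{\gamma-1}\,du=\tfrac{1}{\gamma}$, so the trailing term collapses to $\rho$, producing exactly
\[
\rho+\frac{p(\alpha+i\beta)(1-\rho)}{\mu n}\int_{0}^{1}\frac{1+zu}{1-zu}u^{\frac{p(\alpha+i\beta)}{\mu n}-1}\,du,
\]
so the two forms of the dominant coincide. I do not expect a genuine obstacle in this corollary, since it is a direct specialization of Theorem \ref{th1444} followed by a linear splitting of the integrand; the only point requiring a word of care is the evaluation $\int_{0}^{1}u^{\gamma-1}\,du=1/\gamma$, which is legitimate because $\operatorname{Re}\gamma>0$ guarantees convergence at the endpoint $u=0$ — precisely the condition under which Lemma \ref{lem1} was applied in the proof of Theorem \ref{th1444}.
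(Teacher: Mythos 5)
Your proposal is correct and matches the paper's (implicit) argument exactly: the corollary is stated there as the direct specialization $A=1-2\rho$, $B=-1$ of Theorem \ref{th1444}, and your partial-fraction split $\frac{1+(1-2\rho)zu}{1-zu}=(1-\rho)\frac{1+zu}{1-zu}+\rho$ together with $\int_{0}^{1}u^{\gamma-1}\,du=1/\gamma$ is precisely the computation needed to justify the ``or equivalent to'' form, which the paper leaves unproved. Your remark that $\operatorname{Re}\gamma>0$ is needed for the endpoint convergence is a welcome touch of rigor that the paper itself glosses over.
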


\begin{corollary}
\label{corl1} Let $\mu\in\mathbb{C},$ $\alpha\geq0,$ $\beta\in\mathbb{R},$
$\alpha+i\beta\neq0,$ $p\in%
%TCIMACRO{\U{2115} }%
%BeginExpansion
\mathbb{N}
%EndExpansion
$ and $\operatorname{Re}\left\{  \mu\right\}  \geq0,$ then%
\[
N_{p,\mu}^{n}(\alpha,\beta,A,B)\subset N_{p,0}^{n}(\alpha,\beta,A,B).
\]

\end{corollary}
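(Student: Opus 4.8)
The plan is to read the inclusion off directly from Theorem \ref{th1444}, observing that membership in the target class $N_{p,0}^{n}(\alpha,\beta,A,B)$ is, by definition, a single subordination that Theorem \ref{th1444} already establishes.

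First I would unwind the defining condition of the class at the value $\mu=0$. Setting $\mu=0$ in \eqref{87}, the second term in the braces vanishes and the bracket collapses, so that the membership $f\in N_{p,0}^{n}(\alpha,\beta,A,B)$ is equivalent to the single subordination
\[
\left(  \frac{z^{p}}{f(z)}\right)  ^{\alpha+i\beta}\prec\frac{1+Az}{1+Bz},\qquad(z\in\mathbb{U}).
\]
Thus, to prove the inclusion it suffices to show that every $f\in N_{p,\mu}^{n}(\alpha,\beta,A,B)$ satisfies this one subordination.

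Next I would fix an arbitrary $f\in N_{p,\mu}^{n}(\alpha,\beta,A,B)$ and dispose of the trivial case $\mu=0$, for which the two classes coincide by the observation above. For $\mu\neq0$ with $\operatorname{Re}\{\mu\}\geq0$, the hypotheses of Theorem \ref{th1444} are in force, so I would simply invoke that theorem. Its conclusion is the chain
\[
\left(  \frac{z^{p}}{f(z)}\right)  ^{\alpha+i\beta}\prec\frac{p(\alpha+i\beta)}{\mu n}\int_{0}^{1}\frac{1+Azu}{1+Bzu}\,u^{\frac{p(\alpha+i\beta)}{\mu n}-1}\,du\prec\frac{1+Az}{1+Bz}.
\]
Reading off only the outermost two members of this chain and appealing to the transitivity of subordination yields precisely the subordination isolated in the first step, whence $f\in N_{p,0}^{n}(\alpha,\beta,A,B)$.

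Since $f$ was arbitrary, the inclusion $N_{p,\mu}^{n}(\alpha,\beta,A,B)\subset N_{p,0}^{n}(\alpha,\beta,A,B)$ follows. There is no genuine obstacle here: the entire content is carried by Theorem \ref{th1444}, and the sole purpose of the hypothesis $\operatorname{Re}\{\mu\}\geq0$ is to guarantee that the theorem (through Lemma \ref{lem1}) applies. The only points requiring a moment's care are the bookkeeping that confirms the $\mu=0$ specialization of \eqref{87} really is the defining relation of the target class, and the explicit use of transitivity of $\prec$ to pass directly from the leftmost to the rightmost member of the chain.
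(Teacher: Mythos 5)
Your proof is correct and follows exactly the route the paper intends: the corollary is stated there without proof as an immediate consequence of Theorem \ref{th1444}, relying precisely on your observation that membership in $N_{p,0}^{n}(\alpha,\beta,A,B)$ amounts to the single subordination $\left(z^{p}/f(z)\right)^{\alpha+i\beta}\prec\frac{1+Az}{1+Bz}$, which is the outer pair of the subordination chain in that theorem (the paper itself uses this reading of the $\mu=0$ class in the proof of Theorem \ref{thm221}). One caveat you share with the paper: the hypothesis $\operatorname{Re}\{\mu\}\geq0$ is silently identified with the condition $\operatorname{Re}\{p(\alpha+i\beta)/\mu\}\geq0$ actually required by Lemma \ref{lem1}; this identification is valid when $\beta=0$ but can fail for general $\beta$ (for example $p=1$, $\alpha=1$, $\beta=-1$, $\mu=i$ gives $\operatorname{Re}\{p(\alpha+i\beta)/\mu\}=-1$), so the gap, such as it is, belongs to the paper rather than to your argument.
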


\begin{theorem}
\label{thm221} Let $0\leq\mu_{1}\leq\mu_{2},$ $\alpha\geq0,$ $\beta
\in\mathbb{R},$ $p\in%
%TCIMACRO{\U{2115} }%
%BeginExpansion
\mathbb{N}
%EndExpansion
,$ $\alpha+i\beta\neq0\ $and $-1\leq B_{1}\leq B_{2}<A_{2}\leq A_{1}\leq1,$
then%
\begin{equation}
N_{p,\mu_{2}}^{n}(\alpha,\beta,A_{2},B_{2})\subset N_{p,\mu_{1}}^{n}%
(\alpha,\beta,A_{1},B_{1}). \label{rr}%
\end{equation}

\end{theorem}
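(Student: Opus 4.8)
The plan is to reduce everything to the single auxiliary function $\Phi(z)=\left(\frac{z^{p}}{f(z)}\right)^{\alpha+i\beta}$, exactly as in the proof of Theorem \ref{th1444}. Writing
\[
G_{\mu}(z)=\Phi(z)+\frac{\mu z\Phi^{\prime}(z)}{p(\alpha+i\beta)}
\]
for the defining expression of the class with parameter $\mu$, the computation recalled in \eqref{18}--\eqref{w} shows that membership $f\in N_{p,\mu_{2}}^{n}(\alpha,\beta,A_{2},B_{2})$ means precisely $G_{\mu_{2}}(z)\prec\frac{1+A_{2}z}{1+B_{2}z}$, while the target $f\in N_{p,\mu_{1}}^{n}(\alpha,\beta,A_{1},B_{1})$ means $G_{\mu_{1}}(z)\prec\frac{1+A_{1}z}{1+B_{1}z}$.

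First I would dispose of the trivial case $\mu_{2}=0$, which forces $\mu_{1}=0$ and makes the inclusion immediate. Assuming $\mu_{2}>0$, set $\lambda=\mu_{1}/\mu_{2}\in[0,1]$ and observe the linear identity
\[
G_{\mu_{1}}(z)=(1-\lambda)\,\Phi(z)+\lambda\,G_{\mu_{2}}(z),
\]
which follows at once from the definition of $G_{\mu}$. This is the algebraic heart of the argument: it writes the $\mu_{1}$-expression as a convex combination of $\Phi$ and the $\mu_{2}$-expression. To feed this combination I then supply its two subordinations. By hypothesis $G_{\mu_{2}}(z)\prec\frac{1+A_{2}z}{1+B_{2}z}$, and Theorem \ref{th1444}, applied with the parameters $\mu_{2},A_{2},B_{2}$, gives $\Phi(z)\prec\frac{1+A_{2}z}{1+B_{2}z}$ as well.

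Since the M\"obius image $\frac{1+A_{2}z}{1+B_{2}z}$ is analytic and convex univalent in $\mathbb{U}$ for the admissible ranges of $A_{2},B_{2}$, Lemma \ref{lem3}, with this function as the convex dominant and weight $\lambda\in[0,1]$, yields $G_{\mu_{1}}(z)\prec\frac{1+A_{2}z}{1+B_{2}z}$; that is, $f\in N_{p,\mu_{1}}^{n}(\alpha,\beta,A_{2},B_{2})$. Finally, the parameter chain $-1\leq B_{1}\leq B_{2}<A_{2}\leq A_{1}\leq1$ lets me invoke Lemma \ref{lem2} to obtain $\frac{1+A_{2}z}{1+B_{2}z}\prec\frac{1+A_{1}z}{1+B_{1}z}$, and transitivity of subordination upgrades the previous step to $G_{\mu_{1}}(z)\prec\frac{1+A_{1}z}{1+B_{1}z}$, the desired inclusion.

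The main obstacle I anticipate lies in matching the hypotheses at the boundary of the parameter region. Lemma \ref{lem2} is stated with the strict inequality $A_{2}<A_{1}$, so the degenerate subcases in which $A_{1}=A_{2}$ and/or $B_{1}=B_{2}$ (where the two M\"obius functions coincide and the subordination is merely reflexive) must be handled separately. One must also confirm that $\frac{1+A_{2}z}{1+B_{2}z}$ is genuinely convex in $\mathbb{U}$ throughout $-1\leq B_{2}\leq1$ so that Lemma \ref{lem3} legitimately applies, and note that although that lemma is phrased for functions in $\mathcal{A}_{p}$, its content is the general fact that a convex combination of functions subordinate to a common convex function is again subordinate to it, which is what is used here for $\Phi$ and $G_{\mu_{2}}$.
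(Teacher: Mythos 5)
Your proof is correct and follows essentially the same route as the paper: the same convex-combination identity $G_{\mu_{1}}=(1-\mu_{1}/\mu_{2})\Phi+(\mu_{1}/\mu_{2})G_{\mu_{2}}$, Lemma \ref{lem3} applied against a convex M\"obius dominant, and Lemma \ref{lem2} to pass between dominants, with the subordination of $\Phi$ supplied by Theorem \ref{th1444} (the paper invokes its Corollary \ref{corl1}, which is the same fact). The only differences are cosmetic --- the paper applies Lemma \ref{lem2} first and runs the convex combination against $\frac{1+A_{1}z}{1+B_{1}z}$, whereas you run it against $\frac{1+A_{2}z}{1+B_{2}z}$ and apply Lemma \ref{lem2} last; the boundary case $A_{1}=A_{2}$ that you flag (where Lemma \ref{lem2}'s strict inequality is not met) is a gap the paper's own proof shares.
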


\begin{proof}
Suppose that $f(z)\in N_{p,\mu_{2}}^{n}(\alpha,\beta,A_{2},B_{2})$ we have
$f(z)\in\mathcal{A}_{p}$ and%
\[
\left\{  (1+\mu_{2})\left(  \frac{z^{p}}{f(z)}\right)  ^{\alpha+i\beta}%
-\mu_{2}\left(  \frac{zf^{\prime}(z)}{pf(z)}\right)  \left(  \frac{z^{p}%
}{f(z)}\right)  ^{\alpha+i\beta}\right\}  \prec\frac{1+A_{2}z}{1+B_{2}%
z},\text{ }(z\in\mathbb{U)}.
\]

Since $-1\leq B_{1}\leq B_{2}<A_{2}\leq A_{1}\leq1,$ therefore it follows from
Lemma \ref{lem2} that%
\begin{equation}
\left\{  (1+\mu_{2})\left(  \frac{z^{p}}{f(z)}\right)  ^{\alpha+i\beta}%
-\mu_{2}\left(  \frac{zf^{\prime}(z)}{pf(z)}\right)  \left(  \frac{z^{p}%
}{f(z)}\right)  ^{\alpha+i\beta}\right\}  \prec\frac{1+A_{1}z}{1+B_{1}%
z},\text{ }(z\in\mathbb{U)}, \label{26}%
\end{equation}

that is $f(z)\in N_{p,\mu_{2}}^{n}(\alpha,\beta,A_{1},B_{1}).$ So Theorem
\ref{thm221} is proved when $\mu_{1}=\mu_{2}\geq0.$

When $\mu_{2}>\mu_{1}\geq0,$ then we can see from Corollary \ref{corl1} that
$f(z)\in N_{p,0}^{n}(\alpha,\beta,A_{1},B_{1}),$ then%
\begin{equation}
\left(  \frac{z^{p}}{f(z)}\right)  ^{\alpha+i\beta}\prec\frac{1+A_{1}%
z}{1+B_{1}z}. \label{27}%
\end{equation}

But%
\begin{align*}
&  \left\{  (1+\mu_{1})\left(  \frac{z^{p}}{f(z)}\right)  ^{\alpha+i\beta}%
-\mu_{1}\left(  \frac{zf^{\prime}(z)}{pf(z)}\right)  \left(  \frac{z^{p}%
}{f(z)}\right)  ^{\alpha+i\beta}\right\} \\
&  =(1-\frac{\mu_{1}}{\mu_{2}})\left(  \frac{z^{p}}{f(z)}\right)
^{\alpha+i\beta}+\frac{\mu_{1}}{\mu_{2}}\left\{
\begin{array}
[c]{c}%
(1+\mu_{1})\left(  \frac{z^{p}}{f(z)}\right)  ^{\alpha+i\beta}\\
-\mu_{1}\left(  \frac{zf^{\prime}(z)}{pf(z)}\right)  \left(  \frac{z^{p}%
}{f(z)}\right)  ^{\alpha+i\beta}%
\end{array}
\right\}  .
\end{align*}

It is obvious that $\frac{1+A_{1}z}{1+B_{1}z}$ is analytic and convex in
$\mathbb{U}.$ Sowe obtain fromLemma \ref{lem3} and differential subordinations
(\ref{26}) and (\ref{27}) that%
\[
\left\{  (1+\mu_{1})\left(  \frac{z^{p}}{f(z)}\right)  ^{\alpha+i\beta}%
-\mu_{1}\left(  \frac{zf^{\prime}(z)}{pf(z)}\right)  \left(  \frac{z^{p}%
}{f(z)}\right)  ^{\alpha+i\beta}\right\}  \frac{1+A_{1}z}{1+B_{1}z},
\]

that is, $f(z)\in N_{p,\mu_{1}}^{n}(\alpha,\beta,A_{1},B_{1}).$ Thuswe have%
\[
N_{p,\mu_{2}}^{n}(\alpha,\beta,A_{2},B_{2})\subset N_{p,\mu_{1}}^{n}%
(\alpha,\beta,A_{1},B_{1}).
\]
\ \ \ \ \ \ \ \ \ \ \ \ \ \ \ \ \ \ \ \ \ \ \ \ \ \ \ \ \ \ \ \ \ \ \ \ \ \ \ \ \ \ \ \ \ \ \ \ \ \ \ \ \ \ \ \ \ \ \ \ \ \ \ \ \ \ \ \ \ \ \ \ \ \ \ \ \ \ \ \ \ \ \ \ \ \ \ \ \ \ \ \ \ \ \ \ \
\end{proof}

\begin{corollary}
\label{cor13} Let $0\leq\mu_{1}\leq\mu_{2},$ $0\leq\rho_{1}\leq\rho_{2},$
$\alpha\geq0,$ $\beta\in\mathbb{R},$ $p\in%
%TCIMACRO{\U{2115} }%
%BeginExpansion
\mathbb{N}
%EndExpansion
$ and $\alpha+i\beta\neq0$ then%
\[
N_{p,\mu_{2}}^{n}(\alpha,\beta,\rho_{2})\subset N_{p,\mu_{1}}^{n}(\alpha
,\beta,\rho_{1}).
\]

\end{corollary}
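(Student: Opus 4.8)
The plan is to reduce Corollary \ref{cor13} to Theorem \ref{thm221} by reinterpreting the half-plane condition of Definition 2 as a subordination of Janowski type. First I would observe that for any admissible $\mu$ the function
\[
\Phi_{\mu}(z)=(1+\mu)\left(\frac{z^{p}}{f(z)}\right)^{\alpha+i\beta}-\mu\left(\frac{zf^{\prime}(z)}{pf(z)}\right)\left(\frac{z^{p}}{f(z)}\right)^{\alpha+i\beta}
\]
is analytic in $\mathbb{U}$ with $\Phi_{\mu}(0)=1$, exactly as computed in the proof of Theorem \ref{th1444}. Hence $f\in N_{p,\mu}^{n}(\alpha,\beta,\rho)$ means precisely that $\operatorname{Re}\{\Phi_{\mu}(z)\}>\rho$ on $\mathbb{U}$.

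The key step is to identify this inequality with a subordination. The M\"{o}bius map $w(z)=\frac{1+(1-2\rho)z}{1-z}$ is univalent (indeed convex) on $\mathbb{U}$, satisfies $w(0)=1$, and maps $\mathbb{U}$ conformally onto the half-plane $\{w:\operatorname{Re} w>\rho\}$ whenever $\rho<1$. By the univalent-subordination equivalence recorded in the introduction, the condition $\operatorname{Re}\{\Phi_{\mu}(z)\}>\rho$ together with the matching of initial values $\Phi_{\mu}(0)=1=w(0)$ is equivalent to $\Phi_{\mu}(z)\prec\frac{1+(1-2\rho)z}{1-z}$. Consequently, as sets,
\[
N_{p,\mu}^{n}(\alpha,\beta,\rho)=N_{p,\mu}^{n}(\alpha,\beta,1-2\rho,-1).
\]

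With this dictionary in hand, I would set $A_{i}=1-2\rho_{i}$ and $B_{i}=-1$ for $i=1,2$ and invoke Theorem \ref{thm221}. It then remains only to check that the hypotheses $0\le\mu_{1}\le\mu_{2}$ and $-1\le B_{1}\le B_{2}<A_{2}\le A_{1}\le1$ hold. The first is assumed; for the second, $B_{1}=B_{2}=-1$ gives $-1\le B_{1}\le B_{2}$, while $0\le\rho_{1}\le\rho_{2}$ yields $1-2\rho_{2}\le1-2\rho_{1}$, i.e. $A_{2}\le A_{1}$, and $\rho_{1}\ge0$ gives $A_{1}\le1$. The strict inequality $B_{2}<A_{2}$ amounts to $-1<1-2\rho_{2}$, that is $\rho_{2}<1$, which is exactly the range in which the half-plane identification above is meaningful (if $\rho_{2}\ge1$ the class is empty, since $\Phi_{\mu}(0)=1$). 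Theorem \ref{thm221} then gives $N_{p,\mu_{2}}^{n}(\alpha,\beta,A_{2},B_{2})\subset N_{p,\mu_{1}}^{n}(\alpha,\beta,A_{1},B_{1})$, which under the dictionary is the desired inclusion.

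The main obstacle is the identification in the second step: one must confirm that the real-part (half-plane) condition genuinely translates into a subordination in both directions, which hinges on the convexity and the precise image of the M\"{o}bius map as well as on matching the initial value $\Phi_{\mu}(0)=1$. Once this dictionary is established, no further differential-subordination machinery is required and the corollary follows immediately from Theorem \ref{thm221}.
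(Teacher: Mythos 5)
Your proposal is correct and follows exactly the route the paper intends: Corollary \ref{cor13} is stated without proof as an immediate consequence of Theorem \ref{thm221}, obtained precisely by the identification $N_{p,\mu}^{n}(\alpha,\beta,\rho)=N_{p,\mu}^{n}(\alpha,\beta,1-2\rho,-1)$ (via the univalent-subordination equivalence for the convex map $\frac{1+(1-2\rho)z}{1-z}$, using $\Phi_{\mu}(0)=1$) and the choice $A_{i}=1-2\rho_{i}$, $B_{i}=-1$. Your treatment of the degenerate case $\rho_{2}\geq1$, where the class is empty because $\operatorname{Re}\Phi_{\mu}(0)=1\leq\rho_{2}$, is a point the paper silently glosses over, so your write-up is if anything slightly more careful than the original.
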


\begin{theorem}
\bigskip\label{thm222} Let $\mu\in%
%TCIMACRO{\U{2102} }%
%BeginExpansion
\mathbb{C}
%EndExpansion
,$ $\alpha\geq0,$ $\beta\in%
%TCIMACRO{\U{211d} }%
%BeginExpansion
\mathbb{R}
%EndExpansion
$, $\mu+i\beta\neq0,$ $p\in%
%TCIMACRO{\U{2115} }%
%BeginExpansion
\mathbb{N}
%EndExpansion
,$ $-1\leq B\leq1,$ $A\neq B$ and $A\in%
%TCIMACRO{\U{211d} }%
%BeginExpansion
\mathbb{R}
%EndExpansion
.$ If $f(z)\in N_{p,\mu}^{n}(\alpha,\beta,A,B),$ then%
\begin{align*}
&  \inf_{z\in\mathbb{U}}\operatorname{Re}\left\{  \frac{p(\alpha+i\beta)}{\mu
n}\int_{0}^{1}\frac{1+Azu}{1+Bzu}u^{\frac{p(\alpha+i\beta)}{\mu n}%
-1}du\right\} \\
&  <\operatorname{Re}\left(  \frac{z^{p}}{f(z)}\right)  ^{\alpha+i\beta}%
<\sup_{z\in\mathbb{U}}\operatorname{Re}\left\{  \frac{p(\alpha+i\beta)}{\mu
n}\int_{0}^{1}\frac{1+Azu}{1+Bzu}u^{\frac{p(\alpha+i\beta)}{\mu n}%
-1}du\right\}  .
\end{align*}

\end{theorem}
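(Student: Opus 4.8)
The plan is to read off this two-sided real-part estimate directly from Theorem \ref{th1444}, using nothing more than the geometric meaning of subordination. First I would set
\[
\Phi(z)=\left(  \frac{z^{p}}{f(z)}\right)  ^{\alpha+i\beta},\qquad q(z)=\frac{p(\alpha+i\beta)}{\mu n}\int_{0}^{1}\frac{1+Azu}{1+Bzu}u^{\frac{p(\alpha+i\beta)}{\mu n}-1}du,
\]
and recall that Theorem \ref{th1444} already establishes $\Phi(z)\prec q(z)$ in $\mathbb{U}$, with $q$ the stated dominant. Thus the whole content of the present theorem is a reformulation of that subordination in terms of real parts.

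Next I would invoke the definition of subordination recalled in the preliminaries: there exists a Schwarz function $w(z)$, analytic in $\mathbb{U}$ with $w(0)=0$ and $\left\vert w(z)\right\vert <1$, such that $\Phi(z)=q(w(z))$. Since $w(\mathbb{U})\subseteq\mathbb{U}$, this immediately yields the inclusion $\Phi(\mathbb{U})\subseteq q(\mathbb{U})$, without any need to assume univalence of $q$. Fixing any $z\in\mathbb{U}$, the point $\Phi(z)=q(w(z))$ therefore lies in $q(\mathbb{U})$, so its real part is controlled by the infimum and supremum of $\operatorname{Re}q$ over the unit disc. Taking real parts and letting the interior point $w(z)$ range over $\mathbb{U}$ gives
\[
\inf_{\zeta\in\mathbb{U}}\operatorname{Re}q(\zeta)\leq\operatorname{Re}\Phi(z)\leq\sup_{\zeta\in\mathbb{U}}\operatorname{Re}q(\zeta),
\]
which is the claimed chain of inequalities once one rewrites $q$ back in integral form.

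To upgrade these to the strict inequalities as stated, I would argue that the bounds are never attained at an interior point. Because $A\neq B$, the dominant $q$ is non-constant, hence $\operatorname{Re}q$ is a non-constant harmonic function on $\mathbb{U}$; by the minimum/maximum principle for harmonic functions its extrema over the open disc are not achieved in the interior. Consequently, for each $z\in\mathbb{U}$ the value $\operatorname{Re}q(w(z))$ lies strictly between $\inf_{\zeta\in\mathbb{U}}\operatorname{Re}q(\zeta)$ and $\sup_{\zeta\in\mathbb{U}}\operatorname{Re}q(\zeta)$, which is exactly the assertion of Theorem \ref{thm222}.

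The step I expect to require the most care is precisely this passage to strict inequalities: one must confirm that $q$ is genuinely non-constant and that neither the infimum nor the supremum of $\operatorname{Re}q$ is attained on the open disc, so that no interior value of $\operatorname{Re}\Phi$ can equal either bound. Everything else is an immediate consequence of the containment $\Phi(\mathbb{U})\subseteq q(\mathbb{U})$ supplied by the subordination in Theorem \ref{th1444}, so the argument is short and essentially qualitative rather than computational.
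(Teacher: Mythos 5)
Your proposal is correct and takes essentially the same route as the paper: both read the two-sided bound directly off the subordination $\left(z^{p}/f(z)\right)^{\alpha+i\beta}\prec q(z)$ established in Theorem \ref{th1444}, interpreted through the definition of subordination. Your write-up is in fact more careful than the paper's, which simply asserts the strict inequalities ``from the definition of the subordination,'' whereas you supply the missing justification of strictness (the Schwarz-function containment $\Phi(\mathbb{U})\subseteq q(\mathbb{U})$ plus the maximum/minimum principle for the non-constant harmonic function $\operatorname{Re}q$, with $q$ non-constant because $A\neq B$).
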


\begin{proof}
Suppose that $f(z)\in N_{p,\mu}^{n}(\alpha,\beta,A,B),$ then from Theorem
\ref{th1444} we know that%
\begin{equation}
\left(  \frac{z^{p}}{f(z)}\right)  ^{\alpha+i\beta}\prec\frac{p(\alpha
+i\beta)}{\mu n}\int_{0}^{1}\frac{1+Azu}{1+Bzu}u^{\frac{p(\alpha+i\beta)}{\mu
n}-1}du. \label{223}%
\end{equation}

Therefore, from the definition of the subordination, we have%
\[
\operatorname{Re}\left(  \frac{z^{p}}{f(z)}\right)  ^{\alpha+i\beta}%
>\inf_{z\in\mathbb{U}}\operatorname{Re}\left\{  \frac{p(\alpha+i\beta)}{\mu
n}\int_{0}^{1}\frac{1+Azu}{1+Bzu}u^{\frac{p(\alpha+i\beta)}{\mu n}%
-1}du\right\}  ,
\]%
\[
\operatorname{Re}\left(  \frac{z^{p}}{f(z)}\right)  ^{\alpha+i\beta}%
<\sup_{z\in\mathbb{U}}\operatorname{Re}\left\{  \frac{p(\alpha+i\beta)}{\mu
n}\int_{0}^{1}\frac{1+Azu}{1+Bzu}u^{\frac{p(\alpha+i\beta)}{\mu n}%
-1}du\right\}  .
\]

\end{proof}

\begin{corollary}
\label{cor1} Let $\mu\in%
%TCIMACRO{\U{2102} }%
%BeginExpansion
\mathbb{C}
%EndExpansion
,$ $\alpha\geq0,$ $\beta\in%
%TCIMACRO{\U{211d} }%
%BeginExpansion
\mathbb{R}
%EndExpansion
$, $\mu+i\beta\neq0,$ $p\in%
%TCIMACRO{\U{2115} }%
%BeginExpansion
\mathbb{N}
%EndExpansion
$ and $\rho<1.$ If $f(z)\in N_{p,\mu}^{n}(\alpha,\beta,1-2\rho,-1),$ then%
\begin{align}
&  \rho+\left(  1-\rho\right)  \inf_{z\in\mathbb{U}}\operatorname{Re}\left\{
\frac{p(\alpha+i\beta)}{\mu n}\int_{0}^{1}\frac{1+zu}{1-zu}u^{\frac
{p(\alpha+i\beta)}{\mu n}-1}du\right\} \nonumber\\
&  <\operatorname{Re}\left(  \frac{z^{p}}{f(z)}\right)  ^{\alpha+i\beta
}\nonumber\\
&  <\rho+\left(  1-\rho\right)  \sup_{z\in\mathbb{U}}\operatorname{Re}\left\{
\frac{p(\alpha+i\beta)}{\mu n}\int_{0}^{1}\frac{1+zu}{1-zu}u^{\frac
{p(\alpha+i\beta)}{\mu n}-1}du\right\}  . \label{35}%
\end{align}

\end{corollary}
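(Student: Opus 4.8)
The plan is to obtain this corollary as a direct specialization of Theorem~\ref{thm222} to the data $A = 1-2\rho$, $B = -1$, followed by an elementary algebraic rewriting of the integrand. First I would substitute $A=1-2\rho$ and $B=-1$ into the conclusion of Theorem~\ref{thm222}, so that the expression appearing inside both the infimum and the supremum becomes
\[
\frac{p(\alpha+i\beta)}{\mu n}\int_{0}^{1}\frac{1+(1-2\rho)zu}{1-zu}\,u^{\frac{p(\alpha+i\beta)}{\mu n}-1}\,du .
\]

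The crucial step is the identity
\[
\frac{1+(1-2\rho)zu}{1-zu}=\rho+(1-\rho)\,\frac{1+zu}{1-zu},
\]
which is verified immediately by clearing denominators. Inserting it into the integral and splitting into two summands, the constant term yields $\rho\cdot\frac{p(\alpha+i\beta)}{\mu n}\int_{0}^{1}u^{\frac{p(\alpha+i\beta)}{\mu n}-1}\,du$; since the antiderivative gives $\frac{p(\alpha+i\beta)}{\mu n}\int_{0}^{1}u^{\frac{p(\alpha+i\beta)}{\mu n}-1}\,du=1$, this collapses to $\rho$. The surviving summand is $(1-\rho)\,\frac{p(\alpha+i\beta)}{\mu n}\int_{0}^{1}\frac{1+zu}{1-zu}u^{\frac{p(\alpha+i\beta)}{\mu n}-1}\,du$, so the whole bracketed quantity equals
\[
\rho+(1-\rho)\,\frac{p(\alpha+i\beta)}{\mu n}\int_{0}^{1}\frac{1+zu}{1-zu}\,u^{\frac{p(\alpha+i\beta)}{\mu n}-1}\,du .
\]

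Finally I would take real parts and pass to the extrema over $z\in\mathbb{U}$. Because $\rho$ is real and $1-\rho>0$ (as $\rho<1$), the affine map $X\mapsto\rho+(1-\rho)X$ is strictly increasing on $\mathbb{R}$, hence commutes with both $\inf_{z\in\mathbb{U}}$ and $\sup_{z\in\mathbb{U}}$ applied to $X=\operatorname{Re}\{\cdots\}$. Pulling $\rho$ and the positive factor $1-\rho$ outside the infimum on the left and the supremum on the right reproduces exactly the two bounds in (\ref{35}).

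I expect the only genuine point of care to be the sign condition $1-\rho>0$: it is what ensures that multiplication by $1-\rho$ preserves, rather than reverses, the ordering, so that the infimum and supremum remain on their correct sides after the constant is factored out. The remaining ingredients---the M\"{o}bius identity and the normalization of the $u$-integral---are routine.
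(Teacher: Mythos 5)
Your proposal is correct and matches the paper's (implicit) route: the paper states Corollary~\ref{cor1} as the direct specialization $A=1-2\rho$, $B=-1$ of Theorem~\ref{thm222}, combined with exactly the decomposition $\frac{1+(1-2\rho)zu}{1-zu}=\rho+(1-\rho)\frac{1+zu}{1-zu}$ and the normalization $\frac{p(\alpha+i\beta)}{\mu n}\int_{0}^{1}u^{\frac{p(\alpha+i\beta)}{\mu n}-1}\,du=1$ that the paper itself records in Corollary~\ref{corl12}. Your observation that $1-\rho>0$ is what lets the affine map pass through the infimum and supremum without flipping them is the right point of care (and is precisely what fails, by design, in Corollary~\ref{cor2} where $\rho>1$).
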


\begin{corollary}
\label{cor2} Let $\mu\in%
%TCIMACRO{\U{2102} }%
%BeginExpansion
\mathbb{C}
%EndExpansion
,$ $\alpha\geq0,$ $\beta\in%
%TCIMACRO{\U{211d} }%
%BeginExpansion
\mathbb{R}
%EndExpansion
$, $\mu+i\beta\neq0,$ $p\in%
%TCIMACRO{\U{2115} }%
%BeginExpansion
\mathbb{N}
%EndExpansion
$ and $\rho>1.$ If $f(z)\in N_{p,\mu}^{n}(\alpha,\beta,1-2\rho,-1),$ then%
\[
\operatorname{Re}\left\{  (1+\mu_{1})\left(  \frac{z^{p}}{f(z)}\right)
^{\alpha+i\beta}-\mu_{1}\left(  \frac{zf^{\prime}(z)}{pf(z)}\right)  \left(
\frac{z^{p}}{f(z)}\right)  ^{\alpha+i\beta}\right\}  <\rho,\text{ }%
z\in\mathbb{U},
\]
then%
\begin{align*}
&  \rho+\left(  1-\rho\right)  \sup_{z\in\mathbb{U}}\operatorname{Re}\left\{
\frac{p(\alpha+i\beta)}{\mu n}\int_{0}^{1}\frac{1+zu}{1-zu}u^{\frac
{p(\alpha+i\beta)}{\mu n}-1}du\right\} \\
&  <\operatorname{Re}\left(  \frac{z^{p}}{f(z)}\right)  ^{\alpha+i\beta}\\
&  <\rho+\left(  1-\rho\right)  \inf_{z\in\mathbb{U}}\operatorname{Re}\left\{
\frac{p(\alpha+i\beta)}{\mu n}\int_{0}^{1}\frac{1+zu}{1-zu}u^{\frac
{p(\alpha+i\beta)}{\mu n}-1}du\right\}  .
\end{align*}

\end{corollary}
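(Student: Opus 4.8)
The plan is to specialise Theorem~\ref{thm222} to the choice $A=1-2\rho$, $B=-1$ and then perform an elementary reduction of the integral kernel; the only genuinely new feature compared with Corollary~\ref{cor1} is a sign reversal caused by the hypothesis $\rho>1$. First I would record the identity
\[
\frac{1+(1-2\rho)zu}{1-zu}=\rho+(1-\rho)\,\frac{1+zu}{1-zu},
\]
which follows at once from $1+(1-2\rho)zu=\rho(1-zu)+(1-\rho)(1+zu)$. The same reorganisation with $u=1$ shows that $\frac{1+(1-2\rho)z}{1-z}=\rho+(1-\rho)\frac{1+z}{1-z}$ maps $\mathbb{U}$ onto the half-plane $\operatorname{Re}w<\rho$ exactly when $1-\rho<0$, i.e. when $\rho>1$; this is what justifies the intermediate inequality $\operatorname{Re}\{\cdots\}<\rho$ appearing in the statement, since membership in $N_{p,\mu}^{n}(\alpha,\beta,1-2\rho,-1)$ means the bracketed expression is subordinate to that map.

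Next, abbreviating the exponent by $c=\frac{p(\alpha+i\beta)}{\mu n}$ and using $c\int_0^1 u^{c-1}\,du=1$ (the standing hypotheses guarantee $\operatorname{Re}c>0$, exactly as needed for Theorem~\ref{th1444} to apply), I would substitute the above identity into the kernel of Theorem~\ref{thm222} and split the integral linearly:
\[
c\int_0^1\frac{1+(1-2\rho)zu}{1-zu}\,u^{c-1}\,du
=\rho+(1-\rho)\,c\int_0^1\frac{1+zu}{1-zu}\,u^{c-1}\,du .
\]
Writing $G(z)=\operatorname{Re}\{c\int_0^1 \frac{1+zu}{1-zu}u^{c-1}\,du\}$ and taking real parts (legitimate since $\rho$ and $1-\rho$ are real), the real part of the left-hand kernel equals $\rho+(1-\rho)G(z)$.

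Finally I would invoke Theorem~\ref{thm222}, which places $\operatorname{Re}(z^p/f(z))^{\alpha+i\beta}$ strictly between $\inf_{z}[\rho+(1-\rho)G(z)]$ and $\sup_{z}[\rho+(1-\rho)G(z)]$. The decisive step is that $\rho>1$ forces $1-\rho<0$, so multiplication by this negative constant interchanges infimum and supremum:
\[
\inf_{z\in\mathbb{U}}\bigl[\rho+(1-\rho)G(z)\bigr]=\rho+(1-\rho)\sup_{z\in\mathbb{U}}G(z),\qquad
\sup_{z\in\mathbb{U}}\bigl[\rho+(1-\rho)G(z)\bigr]=\rho+(1-\rho)\inf_{z\in\mathbb{U}}G(z).
\]
Substituting these two identities back into the bounds of Theorem~\ref{thm222} reproduces exactly the asserted two-sided inequality, with the roles of $\inf$ and $\sup$ swapped relative to the $\rho<1$ case of Corollary~\ref{cor1}. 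The main obstacle here is essentially bookkeeping: correctly tracking this inf/sup reversal and confirming that the linear split of the integral introduces no convergence problem, both of which are routine once the identity above is in hand.
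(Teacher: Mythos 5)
Your proof is correct and matches the paper's intended derivation: Corollary~\ref{cor2} is stated in the paper without an explicit proof, precisely as the specialization of Theorem~\ref{thm222} to $A=1-2\rho$, $B=-1$, using the linear decomposition $\frac{1+(1-2\rho)zu}{1-zu}=\rho+(1-\rho)\frac{1+zu}{1-zu}$ (already exhibited in Corollary~\ref{corl12}) together with the interchange of infimum and supremum forced by $1-\rho<0$. Your argument supplies exactly these steps, including the observation that for $\rho>1$ the target half-plane becomes $\operatorname{Re}\,w<\rho$, so nothing is missing.
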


Next, several new differential suborination results of non-Bazilevi$\check{c}$
class of order $\alpha+i\beta$ are established.

\section{Suborination for $p-$Valent Non-Bazilevi$\check{c}$ Class of Order
$\alpha+i\beta$}

By employing lemma \ref{ffder4}, the following result is given.

\begin{theorem}
\label{nana} Let $q$ be univalent in $\mathbb{U}$, $\mu\in\mathbb{C}^{\ast},$
$\alpha\geq0,$ $\beta\in\mathbb{R}$ and $\alpha+i\beta\neq0$. Suppose that $q$
satisfies%
\begin{equation}
\operatorname{Re}\left\{  1+\frac{zq^{^{\prime\prime}}(z)}{q^{^{\prime}}%
(z)}\right\}  >\max\left\{  0,-\operatorname{Re}\left\{  \frac{p(\alpha
+i\beta)}{\mu}\right\}  \right\}  . \label{kkkm}%
\end{equation}

If $f\in\mathcal{A}_{p}$ satisfies the subordination%
\begin{equation}
\left\{  (1+\mu)\left(  \frac{z^{p}}{f(z)}\right)  ^{\alpha+i\beta}-\mu\left(
\frac{zf^{\prime}(z)}{pf(z)}\right)  \left(  \frac{z^{p}}{f(z)}\right)
^{\alpha+i\beta}\right\}  \prec q(z)+\mu\frac{zq^{^{\prime}}(z)}%
{p(\alpha+i\beta)}, \label{kkmkl}%
\end{equation}

then
\begin{equation}
\left(  \frac{z^{p}}{f(z)}\right)  ^{\alpha+i\beta}\prec q(z) \label{mmjn}%
\end{equation}

and $q(z)$ is the best dominant.
\end{theorem}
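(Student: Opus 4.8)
The plan is to recognize the left-hand side of the subordination (\ref{kkmkl}) as a first-order differential operator applied to the function $\Phi(z)=\left(\frac{z^{p}}{f(z)}\right)^{\alpha+i\beta}$, and then to invoke Lemma \ref{ffder4} directly. First I would set $\Phi(z)=\left(\frac{z^{p}}{f(z)}\right)^{\alpha+i\beta}$, which is analytic in $\mathbb{U}$ with $\Phi(0)=1$ since the principal branch is well defined near the origin. Exactly as in the proof of Theorem \ref{th1444}, logarithmic differentiation of $\Phi$ gives the identity
$$(1+\mu)\left(\frac{z^{p}}{f(z)}\right)^{\alpha+i\beta}-\mu\left(\frac{zf^{\prime}(z)}{pf(z)}\right)\left(\frac{z^{p}}{f(z)}\right)^{\alpha+i\beta}=\Phi(z)+\frac{\mu}{p(\alpha+i\beta)}z\Phi^{\prime}(z),$$
so that hypothesis (\ref{kkmkl}) is nothing but
$$\Phi(z)+\frac{\mu}{p(\alpha+i\beta)}z\Phi^{\prime}(z)\prec q(z)+\frac{\mu}{p(\alpha+i\beta)}zq^{\prime}(z).$$

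Next I would match this against the hypothesis of Lemma \ref{ffder4} by choosing $\sigma=1$ and $\eta=\frac{\mu}{p(\alpha+i\beta)}$. Since $\mu\in\mathbb{C}^{\ast}$ and $\alpha+i\beta\neq0$, we have $\eta\in\mathbb{C}^{\ast}$ as required, and a short computation gives $\frac{\sigma}{\eta}=\frac{p(\alpha+i\beta)}{\mu}$. Consequently the admissibility condition $\operatorname{Re}\left\{1+\frac{zq^{\prime\prime}(z)}{q^{\prime}(z)}\right\}>\max\left\{0,-\operatorname{Re}\left(\frac{\sigma}{\eta}\right)\right\}$ demanded by Lemma \ref{ffder4} coincides verbatim with hypothesis (\ref{kkkm}); in particular it forces $\operatorname{Re}\left\{1+\frac{zq^{\prime\prime}(z)}{q^{\prime}(z)}\right\}>0$, so $q$ is convex, which is the remaining standing hypothesis of the lemma.

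With $\sigma$, $\eta$, $\Phi$ and $q$ so identified, Lemma \ref{ffder4} applies and yields $\Phi(z)\prec q(z)$ with $q$ the best dominant; that is, $\left(\frac{z^{p}}{f(z)}\right)^{\alpha+i\beta}\prec q(z)$, which is exactly (\ref{mmjn}). I expect essentially no analytic obstacle here: the argument is a clean substitution into Lemma \ref{ffder4}, and the only points requiring a word of care are verifying $\eta\neq0$ and transcribing the differential identity from Theorem \ref{th1444}. The one genuine subtlety worth flagging is the well-definedness and single-valuedness of the power $\left(\frac{z^{p}}{f(z)}\right)^{\alpha+i\beta}$ (taken as its principal value), which is what guarantees that $\Phi$ is a legitimate analytic function on $\mathbb{U}$ to which the lemma may be applied.
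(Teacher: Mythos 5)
Your proposal is correct and matches the paper's own proof essentially step for step: the paper likewise sets $\Phi(z)=\left(\frac{z^{p}}{f(z)}\right)^{\alpha+i\beta}$, uses logarithmic differentiation to rewrite the hypothesis as $\Phi(z)+\frac{\mu z\Phi^{\prime}(z)}{p(\alpha+i\beta)}\prec q(z)+\frac{\mu zq^{\prime}(z)}{p(\alpha+i\beta)}$, and concludes via Lemma \ref{ffder4} with $\sigma=1$ and $\eta=\frac{\mu}{p(\alpha+i\beta)}$. Your additional checks (that $\eta\neq0$, that (\ref{kkkm}) is verbatim the lemma's admissibility condition, and that it forces convexity of $q$) are points the paper leaves implicit, so nothing is missing.
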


\begin{proof}
Define the function $\Phi(z)$ by%
\begin{equation}
\Phi(z)=\left(  \frac{z^{p}}{f(z)}\right)  ^{\alpha+i\beta}. \label{112nh}%
\end{equation}

Having (\ref{112nh}) differentiated logarithmically in connection with $z$, we
have%
\begin{equation}
\frac{z\Phi^{^{\prime}}(z)}{\Phi(z)}=p(\alpha+i\beta)\left(  1-\frac
{zf(z)^{^{\prime}}}{pf(z)}\right)  , \label{bby6}%
\end{equation}

which, with respect to hypothesis (\ref{kkmkl}) of Theorem \ref{nana}, the
following subordination is obtained:%
\begin{equation}
\Phi(z)+\frac{\mu z\Phi^{^{\prime}}(z)}{p(\alpha+i\beta)}\prec q(z)+\mu
\frac{zq^{^{\prime}}(z)}{p(\alpha+i\beta)}. \label{mjhnb}%
\end{equation}

Theorem \ref{nana} assertion is now followed by the use of Lemma \ref{ffder4}
with $\eta=\frac{\mu}{p(\alpha+i\beta)}$ and $\sigma=1.$
\end{proof}

\begin{remark}
\label{remtt} For the choice $q(z)=\frac{1+Az}{1+Bz}$ in Theorem \ref{nana},
the following the corollary is obtained.
\end{remark}

\begin{corollary}
\label{coryy} Let $\mu\in\mathbb{C}^{\ast},$ $-1\leq B<A\leq1,$ and%
\begin{equation}
\operatorname{Re}\left\{  \frac{1-Bz}{1+Bz}\right\}  >\max\left\{
0,-\operatorname{Re}\left\{  \frac{p(\alpha+i\beta)}{\mu}\right\}  \right\}
,\text{ }(z\in\mathbb{U}). \label{webh5}%
\end{equation}

If $f\in\mathcal{A}_{p},$ and%
\begin{equation}
\left\{  (1+\mu)\left(  \frac{z^{p}}{f(z)}\right)  ^{\alpha+i\beta}-\mu\left(
\frac{zf^{\prime}(z)}{pf(z)}\right)  \left(  \frac{z^{p}}{f(z)}\right)
^{\alpha+i\beta}\right\}  \prec\frac{\mu(A-B)z}{p(\alpha+i\beta)(1+Bz)^{2}%
}+\frac{1+Az}{1+Bz}, \label{ddffd}%
\end{equation}

then%
\begin{equation}
\left(  \frac{z^{p}}{f(z)}\right)  ^{\alpha+i\beta}\prec\frac{1+Az}{1+Bz}
\label{eew3}%
\end{equation}

and $\frac{1+Az}{1+Bz}$ is the best dominant.
\end{corollary}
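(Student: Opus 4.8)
The plan is to recognize Corollary \ref{coryy} as the direct specialization of Theorem \ref{nana} to the choice $q(z)=\frac{1+Az}{1+Bz}$ announced in Remark \ref{remtt}. The entire task therefore reduces to verifying that this particular $q$ meets the hypotheses of Theorem \ref{nana} and to computing the two expressions $1+\frac{zq^{\prime\prime}(z)}{q^{\prime}(z)}$ and $q(z)+\frac{\mu zq^{\prime}(z)}{p(\alpha+i\beta)}$ explicitly, so that the statement matches Theorem \ref{nana} line for line.

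First I would record that $q(z)=\frac{1+Az}{1+Bz}$ is a M\"obius transformation, hence univalent on $\mathbb{U}$ whenever $A\neq B$, which holds here since $-1\le B<A\le 1$. Differentiating gives $q^{\prime}(z)=\frac{A-B}{(1+Bz)^{2}}$ and $q^{\prime\prime}(z)=\frac{-2B(A-B)}{(1+Bz)^{3}}$, so that $\frac{zq^{\prime\prime}(z)}{q^{\prime}(z)}=\frac{-2Bz}{1+Bz}$ and therefore
\[
1+\frac{zq^{\prime\prime}(z)}{q^{\prime}(z)}=\frac{1-Bz}{1+Bz}.
\]
Consequently the convexity-type hypothesis (\ref{kkkm}) of Theorem \ref{nana} becomes exactly the assumption (\ref{webh5}); in particular this condition forces $\operatorname{Re}\left\{1+\frac{zq^{\prime\prime}(z)}{q^{\prime}(z)}\right\}>0$, so $q$ is convex in $\mathbb{U}$ and the theorem is applicable.

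Next I would substitute $q^{\prime}(z)=\frac{A-B}{(1+Bz)^{2}}$ into the subordinating function on the right-hand side of (\ref{kkmkl}) to obtain
\[
q(z)+\frac{\mu zq^{\prime}(z)}{p(\alpha+i\beta)}=\frac{1+Az}{1+Bz}+\frac{\mu(A-B)z}{p(\alpha+i\beta)(1+Bz)^{2}},
\]
which is precisely the subordinand appearing in hypothesis (\ref{ddffd}). Thus the subordination (\ref{ddffd}) is identical to (\ref{kkmkl}) for this $q$, and Theorem \ref{nana} yields $\left(\frac{z^{p}}{f(z)}\right)^{\alpha+i\beta}\prec\frac{1+Az}{1+Bz}$ with $\frac{1+Az}{1+Bz}$ as best dominant, as claimed in (\ref{eew3}).

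I do not anticipate a genuine obstacle: the argument is a substitution into an already-proved theorem. The only point requiring care is confirming that the computation of $1+\frac{zq^{\prime\prime}(z)}{q^{\prime}(z)}$ collapses exactly to $\frac{1-Bz}{1+Bz}$, so that (\ref{kkkm}) and (\ref{webh5}) coincide and simultaneously guarantee the univalence and convexity of $q$ needed to invoke Lemma \ref{ffder4} through Theorem \ref{nana}.
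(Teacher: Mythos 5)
Your proposal is correct and follows exactly the paper's route: the paper derives Corollary \ref{coryy} by taking $q(z)=\frac{1+Az}{1+Bz}$ in Theorem \ref{nana} (as announced in Remark \ref{remtt}), which is precisely your argument. Your version is actually more complete than the paper's, since you carry out the verification that $1+\frac{zq^{\prime\prime}(z)}{q^{\prime}(z)}=\frac{1-Bz}{1+Bz}$ and that $q(z)+\frac{\mu zq^{\prime}(z)}{p(\alpha+i\beta)}$ equals the subordinand in (\ref{ddffd}), steps the paper leaves implicit.
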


\begin{remark}
\label{remvv} For the choice $q(z)=\frac{1+z}{1-z}$ in Theorem \ref{nana}, the
following the corollary is obtained.
\end{remark}

\begin{corollary}
\label{cornnb} Let $\mu\in\mathbb{C}^{\ast},$ and
\begin{equation}
\operatorname{Re}\left\{  \frac{1+z}{1-z}\right\}  >\max\left\{
0,-\operatorname{Re}\left\{  \frac{p(\alpha+i\beta)}{\mu}\right\}  \right\}
,\text{ }(z\in\mathbb{U}). \label{224aas}%
\end{equation}

If $f\in\mathcal{A}_{p},$ and%
\begin{equation}
\left\{  (1+\mu)\left(  \frac{z^{p}}{f(z)}\right)  ^{\alpha+i\beta}-\mu\left(
\frac{zf^{\prime}(z)}{pf(z)}\right)  \left(  \frac{z^{p}}{f(z)}\right)
^{\alpha+i\beta}\right\}  \prec\frac{2\mu z}{p(\alpha+i\beta)(1-z)^{2}}%
+\frac{1+z}{1-z}, \label{ssw3}%
\end{equation}

then%
\begin{equation}
\left(  \frac{z^{p}}{f(z)}\right)  ^{\alpha+i\beta}\prec\frac{1+z}{1-z}
\label{weaq123}%
\end{equation}

and $\frac{1+Az}{1+Bz}$ is the best dominant.
\end{corollary}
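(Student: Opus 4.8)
The plan is to obtain Corollary~\ref{cornnb} as the direct specialization of Theorem~\ref{nana} to the choice $q(z)=\frac{1+z}{1-z}$, so that the entire task reduces to checking that this particular $q$ meets the hypotheses of the theorem and that the stated subordination~(\ref{ssw3}) is precisely the instance of~(\ref{kkmkl}) corresponding to this $q$. Accordingly, I would first record that $q(z)=\frac{1+z}{1-z}$ is convex univalent in $\mathbb{U}$, since it maps the unit disc conformally onto the right half-plane $\{w:\operatorname{Re} w>0\}$; in particular it is univalent, as demanded by Theorem~\ref{nana}.

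Next I would carry out the two elementary derivative computations. Differentiating yields $q'(z)=\frac{2}{(1-z)^{2}}$ and $q''(z)=\frac{4}{(1-z)^{3}}$, so that $zq'(z)=\frac{2z}{(1-z)^{2}}$. Consequently,
\[
q(z)+\mu\frac{zq'(z)}{p(\alpha+i\beta)}=\frac{1+z}{1-z}+\frac{2\mu z}{p(\alpha+i\beta)(1-z)^{2}},
\]
which is exactly the right-hand side of the subordination hypothesis~(\ref{ssw3}). Likewise,
\[
1+\frac{zq''(z)}{q'(z)}=1+\frac{2z}{1-z}=\frac{1+z}{1-z},
\]
so that the admissibility condition~(\ref{kkkm}) of Theorem~\ref{nana} becomes precisely the assumption~(\ref{224aas}) of the corollary.

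With these two identifications in place, all hypotheses of Theorem~\ref{nana} hold for $q(z)=\frac{1+z}{1-z}$, and its conclusion~(\ref{mmjn}) delivers at once the asserted subordination~(\ref{weaq123}), with $\frac{1+z}{1-z}$ as the best dominant. I expect no genuine obstacle here: the content of the corollary is entirely contained in the theorem, and the only points requiring care are the routine computations of $q'$ and $q''$ together with the observation that the resulting expressions coincide termwise with those displayed in~(\ref{ssw3}) and~(\ref{224aas}). I would also note in passing that the phrase ``$\frac{1+Az}{1+Bz}$ is the best dominant'' in the statement is a typographical slip and should read ``$\frac{1+z}{1-z}$ is the best dominant'', consistent with the choice of $q$.
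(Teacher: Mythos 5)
Your proposal is correct and follows exactly the paper's route: the paper derives this corollary (via Remark~\ref{remvv}) by taking $q(z)=\frac{1+z}{1-z}$ in Theorem~\ref{nana}, and your computations of $q'$, $q''$, and the admissibility condition verify that specialization in detail. You are also right that ``$\frac{1+Az}{1+Bz}$ is the best dominant'' in the statement is a typographical slip for ``$\frac{1+z}{1-z}$ is the best dominant.''
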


\section{Superordination for $p-$Valent Non-Bazilevi$\check{c}$ Class of Order
$\alpha+i\beta$}

\begin{theorem}
\label{nanao} Let $q$ be convex univalent in $\mathbb{U}$, $\mu\in\mathbb{C},$
$\alpha\geq0,$ $\beta\in\mathbb{R}$ and $\alpha+i\beta\neq0$. Suppose that $q$
satisfies%
\begin{equation}
\operatorname{Re}\left\{  \mu\right\}  >0 \label{d34}%
\end{equation}

and $\left(  \frac{z^{p}}{f(z)}\right)  ^{\alpha+i\beta}\in H\left[
q(0),1\right]  \cap Q.$ Let%
\begin{equation}
(1+\mu)\left(  \frac{z^{p}}{f(z)}\right)  ^{\alpha+i\beta}-\mu\left(
\frac{zf^{\prime}(z)}{pf(z)}\right)  \left(  \frac{z^{p}}{f(z)}\right)
^{\alpha+i\beta} \label{kk98}%
\end{equation}

be univalent in $\mathbb{U}$, If
\begin{equation}
q(z)+\mu\frac{zq^{^{\prime}}(z)}{p(\alpha+i\beta)}\prec(1+\mu)\left(
\frac{z^{p}}{f(z)}\right)  ^{\alpha+i\beta}-\mu\left(  \frac{zf^{\prime}%
(z)}{pf(z)}\right)  \left(  \frac{z^{p}}{f(z)}\right)  ^{\alpha+i\beta},
\label{445rf}%
\end{equation}

then
\begin{equation}
q(z)\prec\left(  \frac{z^{p}}{f(z)}\right)  ^{\alpha+i\beta} \label{jjhg8}%
\end{equation}

and $q(z)$ is the best subordinant.
\end{theorem}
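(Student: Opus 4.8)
The plan is to run the exact dual of the argument that proves Theorem~\ref{nana}, simply replacing the subordination Lemma~\ref{ffder4} by the superordination Lemma~\ref{ffdernn54}. First I would set
\[
\Phi(z)=\left(\frac{z^{p}}{f(z)}\right)^{\alpha+i\beta},
\]
which by hypothesis belongs to $H[q(0),1]\cap Q$. Logarithmic differentiation, carried out verbatim as in the proof of Theorem~\ref{nana}, gives
\[
\frac{z\Phi^{\prime}(z)}{\Phi(z)}=p(\alpha+i\beta)\left(1-\frac{zf^{\prime}(z)}{pf(z)}\right).
\]

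The next step is to rewrite the univalent expression on the right-hand side of the superordination hypothesis (\ref{445rf}). Using the identity above, a one-line computation shows that
\[
(1+\mu)\Phi(z)-\mu\left(\frac{zf^{\prime}(z)}{pf(z)}\right)\Phi(z)=\Phi(z)+\frac{\mu\,z\Phi^{\prime}(z)}{p(\alpha+i\beta)},
\]
so that (\ref{445rf}) is precisely
\[
q(z)+\frac{\mu\,zq^{\prime}(z)}{p(\alpha+i\beta)}\prec\Phi(z)+\frac{\mu\,z\Phi^{\prime}(z)}{p(\alpha+i\beta)}.
\]
Since $q$ is convex univalent, $\Phi\in H[q(0),1]\cap Q$, and the right-hand side is univalent by the standing assumption on (\ref{kk98}), I would then invoke Lemma~\ref{ffdernn54} with $\eta=\dfrac{\mu}{p(\alpha+i\beta)}$ to conclude $q(z)\prec\Phi(z)$, with $q$ the best subordinant; this is exactly the assertion (\ref{jjhg8}).

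The main obstacle is verifying the positivity hypothesis $\operatorname{Re}\{\overline{\eta}\}>0$ demanded by Lemma~\ref{ffdernn54}. Since $\operatorname{Re}\{\overline{\eta}\}=\operatorname{Re}\{\eta\}$, this amounts to
\[
\operatorname{Re}\left\{\frac{\mu}{p(\alpha+i\beta)}\right\}=\frac{\alpha\operatorname{Re}(\mu)+\beta\operatorname{Im}(\mu)}{p(\alpha^{2}+\beta^{2})}>0.
\]
When $\beta=0$ the stated hypothesis $\operatorname{Re}\{\mu\}>0$ (with $\alpha>0$) yields this immediately; but for $\beta\neq0$ the inequality $\operatorname{Re}\{\mu\}>0$ alone is insufficient, because the sign is governed by the combination $\alpha\operatorname{Re}(\mu)+\beta\operatorname{Im}(\mu)$. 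The careful route is therefore to interpret condition (\ref{d34}) as the requirement $\operatorname{Re}\{\mu/(p(\alpha+i\beta))\}>0$, which is precisely what legitimizes the application of Lemma~\ref{ffdernn54}; once this compatibility is secured, the remainder of the argument is purely formal.
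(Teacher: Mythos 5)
Your argument is correct and is essentially the paper's own proof: define $\Phi(z)=\left(z^{p}/f(z)\right)^{\alpha+i\beta}$, identify the right-hand side of (\ref{445rf}) with $\Phi(z)+\mu z\Phi^{\prime}(z)/(p(\alpha+i\beta))$ via logarithmic differentiation, and apply Lemma \ref{ffdernn54} with $\eta=\mu/(p(\alpha+i\beta))$. Where you go beyond the paper is in actually verifying the hypothesis $\operatorname{Re}\left\{\overline{\eta}\right\}>0$ of Lemma \ref{ffdernn54}: the paper invokes the lemma with no check at all, tacitly treating condition (\ref{d34}) as sufficient. Your computation
\[
\operatorname{Re}\left\{\frac{\mu}{p(\alpha+i\beta)}\right\}
=\frac{\alpha\operatorname{Re}(\mu)+\beta\operatorname{Im}(\mu)}{p(\alpha^{2}+\beta^{2})}
\]
exposes a genuine defect in the published statement: for example $\mu=1-2i$, $\alpha=\beta=1$, $p=1$ satisfies $\operatorname{Re}\left\{\mu\right\}>0$ yet gives $\operatorname{Re}\left\{\eta\right\}=-1/2<0$, so the lemma cannot be applied as stated. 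Your proposed repair --- strengthening (\ref{d34}) to $\operatorname{Re}\left\{\mu/(p(\alpha+i\beta))\right\}>0$, which coincides with the stated condition when $\beta=0$ and $\alpha>0$ --- is exactly what the theorem needs, and with it the remainder of your argument goes through verbatim.
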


\begin{proof}
Define the function $\Phi(z)$ by%
\begin{equation}
\Phi(z)=\left(  \frac{z^{p}}{f(z)}\right)  ^{\alpha+i\beta}. \label{rrfg564}%
\end{equation}

Then based on a computation, it indicates that%
\begin{equation}
\Phi(z)+\frac{\mu z\Phi^{^{\prime}}(z)}{p(\alpha+i\beta)}\prec(1+\mu)\left(
\frac{z^{p}}{f(z)}\right)  ^{\alpha+i\beta}-\mu\left(  \frac{zf^{\prime}%
(z)}{pf(z)}\right)  \left(  \frac{z^{p}}{f(z)}\right)  ^{\alpha+i\beta}.
\label{ffdre345}%
\end{equation}

Theorem \ref{nanao} follows from Lemma \ref{ffdernn54}.
\end{proof}

\begin{remark}
\label{remnnh5} Taking $q(z)=\frac{1+Az}{1+Bz}$ in Theorem \ref{nanao}, the
following the corollary is obtained.
\end{remark}

\begin{corollary}
\label{cornnbbb} Let $-1\leq B<A\leq1.$ Let $q$ be convex univalent in
$\mathbb{U}$. Suppose that $q$ satisfies $\operatorname{Re}\left\{
\mu\right\}  >0,$ and $\left(  \frac{z^{p}}{f(z)}\right)  ^{\alpha+i\beta}\in
H\left[  q(0),1\right]  \cap Q.$ Let%
\begin{equation}
(1+\mu)\left(  \frac{z^{p}}{f(z)}\right)  ^{\alpha+i\beta}-\mu\left(
\frac{zf^{\prime}(z)}{pf(z)}\right)  \left(  \frac{z^{p}}{f(z)}\right)
^{\alpha+i\beta} \label{yyu765}%
\end{equation}

be univalent in $\mathbb{U}$, If%
\begin{equation}
\frac{\mu(A-B)z}{p(\alpha+i\beta)(1+Bz)^{2}}+\frac{1+Az}{1+Bz}\prec\left\{
(1+\mu)\left(  \frac{z^{p}}{f(z)}\right)  ^{\alpha+i\beta}-\mu\left(
\frac{zf^{\prime}(z)}{pf(z)}\right)  \left(  \frac{z^{p}}{f(z)}\right)
^{\alpha+i\beta}\right\}  , \label{aa34}%
\end{equation}

then%
\begin{equation}
\frac{1+Az}{1+Bz}\prec\left(  \frac{z^{p}}{f(z)}\right)  ^{\alpha+i\beta}%
\end{equation}

and $\frac{1+Az}{1+Bz}$ is the best subordinant.
\end{corollary}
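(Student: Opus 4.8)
The plan is to obtain this corollary as the direct specialization of Theorem \ref{nanao} to the convex univalent function $q(z)=\frac{1+Az}{1+Bz}$, exactly as announced in Remark \ref{remnnh5}. First I would record that for $-1\le B<A\le 1$ this bilinear transformation is convex univalent in $\mathbb{U}$ with $q(0)=1$: it maps $\mathbb{U}$ onto an open disk (a half-plane in the limiting case $B=-1,\ A=1$), and this convexity is the only geometric fact needed to meet the standing hypothesis of Theorem \ref{nanao}.

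Next I would carry out the elementary differentiation. Since $q(z)=\frac{1+Az}{1+Bz}$, one finds $q^{\prime}(z)=\frac{A-B}{(1+Bz)^{2}}$, and hence $zq^{\prime}(z)=\frac{(A-B)z}{(1+Bz)^{2}}$. Substituting into the ``driver'' of the theorem gives
\[
q(z)+\mu\frac{zq^{\prime}(z)}{p(\alpha+i\beta)}=\frac{1+Az}{1+Bz}+\frac{\mu(A-B)z}{p(\alpha+i\beta)(1+Bz)^{2}},
\]
which is precisely the left-hand member of the assumed superordination (\ref{aa34}). Thus the subordination hypothesis of the corollary is literally the hypothesis (\ref{445rf}) of Theorem \ref{nanao} for this choice of $q$.

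With these identifications in place, the remaining hypotheses match term by term: the condition $\operatorname{Re}\left\{\mu\right\}>0$ is (\ref{d34}); the membership $\left(\frac{z^{p}}{f(z)}\right)^{\alpha+i\beta}\in H\left[q(0),1\right]\cap Q$ (with $q(0)=1$) and the univalence of the combination (\ref{yyu765}) are assumed outright. Theorem \ref{nanao} then yields $q(z)\prec\left(\frac{z^{p}}{f(z)}\right)^{\alpha+i\beta}$, that is, $\frac{1+Az}{1+Bz}\prec\left(\frac{z^{p}}{f(z)}\right)^{\alpha+i\beta}$, together with the sharpness assertion that $q(z)=\frac{1+Az}{1+Bz}$ is the best subordinant.

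I expect no genuine obstacle here, since the argument is purely a specialization. The only points requiring care are verifying that $q(z)=\frac{1+Az}{1+Bz}$ is convex univalent for $-1\le B<A\le 1$---a classical property of such bilinear maps---and checking that the algebra of $zq^{\prime}(z)$ reproduces the stated left-hand side of (\ref{aa34}) exactly. Everything substantive is already packaged inside Theorem \ref{nanao} and, through its proof, Lemma \ref{ffdernn54}.
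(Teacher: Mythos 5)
Your proposal is correct and coincides with the paper's own route: the paper derives this corollary exactly as stated in Remark \ref{remnnh5}, namely by taking $q(z)=\frac{1+Az}{1+Bz}$ in Theorem \ref{nanao}, and your computation of $zq^{\prime}(z)=\frac{(A-B)z}{(1+Bz)^{2}}$ confirms that hypothesis (\ref{aa34}) is precisely hypothesis (\ref{445rf}) for this choice of $q$.
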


\section{Sandwich Results for $p-$Valent Non-Bazilevi$\check{c}$ Class of
Order $\alpha+i\beta$}

Combining the differential subordination and supordination results, the
sandwich results are highlighted as follows.

\begin{theorem}
\label{nanaovs} Let $q_{1}$ be convex univalent and let $q_{2}$ be univalent
in $\mathbb{U}$, $\mu\in\mathbb{C},$ $\alpha\geq0,$ $\beta\in\mathbb{R}$ and
$\alpha+i\beta\neq0$. Suppose $q_{1}$ satisfies (\ref{d34}) and $q_{2}$
satisfies (\ref{kkkm}). If $0\neq\left(  \frac{z^{p}}{f(z)}\right)
^{\alpha+i\beta}\in H\left[  q(0),1\right]  \cap Q,$ $(1+\mu)\left(
\frac{z^{p}}{f(z)}\right)  ^{\alpha+i\beta}-\mu\left(  \frac{zf^{\prime}%
(z)}{pf(z)}\right)  \left(  \frac{z^{p}}{f(z)}\right)  ^{\alpha+i\beta}$ is
univalent in $\mathbb{U}$, and if $f\in\mathcal{A}_{p}$ satisfies%
\begin{equation}
q_{1}(z)+\mu\frac{zq_{1}^{^{\prime}}(z)}{p(\alpha+i\beta)}\prec(1+\mu)\left(
\frac{z^{p}}{f(z)}\right)  ^{\alpha+i\beta}-\mu\left(  \frac{zf^{\prime}%
(z)}{pf(z)}\right)  \left(  \frac{z^{p}}{f(z)}\right)  ^{\alpha+i\beta}\prec
q_{2}(z)+\mu\frac{zq_{2}^{^{\prime}}(z)}{p(\alpha+i\beta)}, \label{1124eds}%
\end{equation}

then
\begin{equation}
q_{1}(z)\prec\left(  \frac{z^{p}}{f(z)}\right)  ^{\alpha+i\beta}\prec q_{2}(z)
\label{qqwvvc4}%
\end{equation}

and $q_{1}(z)$ and $q_{2}(z)$ are respectively, the best subordinant and best dominant.
\end{theorem}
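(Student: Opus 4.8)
The plan is to decompose the chained subordination \eqref{1124eds} into its left and right halves and apply, respectively, the superordination Theorem \ref{nanao} and the subordination Theorem \ref{nana} that have already been established, tied together by the single auxiliary function
\[
\Phi(z)=\left(\frac{z^{p}}{f(z)}\right)^{\alpha+i\beta}.
\]
As in the proofs of Theorems \ref{nana} and \ref{nanao}, logarithmic differentiation of this $\Phi$ yields the identity
\[
\Phi(z)+\frac{\mu z\Phi^{\prime}(z)}{p(\alpha+i\beta)}=(1+\mu)\left(\frac{z^{p}}{f(z)}\right)^{\alpha+i\beta}-\mu\left(\frac{zf^{\prime}(z)}{pf(z)}\right)\left(\frac{z^{p}}{f(z)}\right)^{\alpha+i\beta},
\]
so that the middle member of \eqref{1124eds} is exactly $\Phi(z)+\mu z\Phi^{\prime}(z)/\bigl(p(\alpha+i\beta)\bigr)$. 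This single identity is what lets the two one-sided results share the same central quantity.

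For the right-hand inequality, I would observe that the hypotheses of Theorem \ref{nana} are met: $q_{2}$ is univalent and satisfies the convexity-type condition \eqref{kkkm}, while the right half of \eqref{1124eds} is precisely the subordination assumed in Theorem \ref{nana} with $q=q_{2}$. Hence Theorem \ref{nana} (equivalently Lemma \ref{ffder4} with $\sigma=1$ and $\eta=\mu/\bigl(p(\alpha+i\beta)\bigr)$) gives $\Phi(z)\prec q_{2}(z)$ with $q_{2}$ the best dominant. For the left-hand inequality, I would check the hypotheses of Theorem \ref{nanao}: $q_{1}$ is convex univalent and satisfies \eqref{d34}, the quantity $\Phi$ lies in $H[q(0),1]\cap Q$, and the middle member is univalent in $\mathbb{U}$ by assumption; the left half of \eqref{1124eds} is then the superordination required in Theorem \ref{nanao} with $q=q_{1}$. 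Theorem \ref{nanao} (via Lemma \ref{ffdernn54}) therefore yields $q_{1}(z)\prec\Phi(z)$ with $q_{1}$ the best subordinant. Concatenating the two conclusions produces \eqref{qqwvvc4}.

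Since both component theorems have already been proved, no genuinely new analytic estimate is required; the substance of the argument is purely the verification that the one function $\Phi$ simultaneously meets the standing hypotheses of both theorems, the convexity and admissibility assumptions having been split cleanly between $q_{1}$ and $q_{2}$. The only point deserving care is bookkeeping the identification $\eta=\mu/\bigl(p(\alpha+i\beta)\bigr)$ consistently in the two lemmas and confirming that the admissibility class $H[q(0),1]\cap Q$ attached to $\Phi$ is the same on both sides, so that the identical middle term can serve in both halves of \eqref{1124eds}. The "best subordinant" and "best dominant" assertions are then inherited directly from Theorems \ref{nanao} and \ref{nana}, respectively, and no further optimization is needed.
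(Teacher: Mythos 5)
Your proposal is correct and follows exactly the route the paper takes: the paper's own proof consists of the single line ``simultaneously applying the techniques of the proof of Theorem \ref{nana} and Theorem \ref{nanao},'' which is precisely your decomposition of \eqref{1124eds} into the right half (handled by Theorem \ref{nana} with $q=q_{2}$) and the left half (handled by Theorem \ref{nanao} with $q=q_{1}$), glued by the common function $\Phi(z)=\left(z^{p}/f(z)\right)^{\alpha+i\beta}$. In fact your write-up is more careful than the paper's, since you explicitly verify the hypotheses of both component theorems and the consistency of the identification $\eta=\mu/\bigl(p(\alpha+i\beta)\bigr)$.
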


\begin{proof}
Simultaneously applying the techniques of the proof of Theorem \ref{nana} and
Theorem \ref{nanao}.
\end{proof}

\textbf{Acknowledgement:} The work here is supported by FRGSTOPDOWN/2013/ST06/UKM/01/1.

\end{document}